\newcommand{\CC}{\mathbb{C}}
\newcommand{\GG}{\mathbb{G}}
\newcommand{\HH}{\mathbb{H}}
\newcommand{\NN}{\mathbb{N}}
\newcommand{\QQ}{\mathbb{Q}}
\newcommand{\RR}{\mathbb{R}}
\newcommand{\ZZ}{\mathbb{Z}}
\newcommand{\Aa}{\mathbf{A}}
\newcommand{\Ga}{\mathbf{G}}
\newcommand{\La}{\mathbf{L}}
\newcommand{\Ma}{\mathbf{M}}
\newcommand{\Na}{\mathbf{N}}
\newcommand{\Pa}{\mathbf{P}}
\newcommand{\Ra}{\mathbf{R}}
\newcommand{\Sa}{\mathbf{S}}
\newcommand{\Ta}{\mathbf{T}}
\newcommand{\Ua}{\mathbf{U}}
\DeclareMathOperator{\Ad}{Ad}
\DeclareMathOperator{\rank}{rank}
\DeclareMathOperator{\SO}{SO}
\DeclareMathOperator{\Sp}{Sp}
\DeclareMathOperator{\SL}{SL}
\DeclareMathOperator{\SU}{SU}
\DeclareMathOperator{\Unitary}{U}
\DeclareMathOperator{\unip}{unip}
\newcommand{\cover}{\widetilde}
\newcommand{\ints}{\mathcal{O}}
\newcommand{\quot}{\overline}
\newcommand{\hyper}{\mathfrak{H}}
\newcommand{\Lie}[1]{\lowercase{\mathfrak{#1}}}
\newcommand{\subgroups}{\mathcal{L}}
\newcommand{\ra}{\overrightarrow}
\newcommand{\la}{\overleftarrow}
\numberwithin{equation}{section}
\newtheorem{thm}{Theorem}[section]
\newtheorem{lem}[thm]{Lemma}
\newtheorem{prop}[thm]{Proposition}
\newtheorem{cor}[thm]{Corollary}
\theoremstyle{definition}
\newtheorem{assumption}[thm]{Assumption}
\newtheorem{defn}[thm]{Definition}
\newtheorem{notation}[thm]{Notation}
\newtheorem{rem}[thm]{Remark}
\newtheorem{rems}[thm]{Remarks}
\begin{document}

\allowdisplaybreaks

\newcommand{\arXivNumber}{1908.02365}

\renewcommand{\PaperNumber}{012}

\FirstPageHeading

\ShortArticleName{Quasi-Isometric Bounded Generation by ${\mathbb Q}$-Rank-One Subgroups}

\ArticleName{Quasi-Isometric Bounded Generation \\ by ${\mathbb Q}$-Rank-One Subgroups}

\Author{Dave Witte MORRIS}

\AuthorNameForHeading{D.W.~Morris}

\Address{Department of Mathematics and Computer Science, University of Lethbridge,\\
Lethbridge, Alberta, T1K~3M4, Canada}
\Email{\href{mailto:Dave.Morris@uleth.ca}{Dave.Morris@uleth.ca}}
\URLaddress{\url{http://people.uleth.ca/~dave.morris/}}

\ArticleDates{Received August 16, 2019, in final form March 05, 2020; Published online March 11, 2020}

\Abstract{We say that a subset~$X$ \emph{quasi-isometrically boundedly generates} a finitely gene\-ra\-ted group~$\Gamma$ if each element~$\gamma$ of a finite-index subgroup of~$\Gamma$ can be written as a~pro\-duct $\gamma = x_1 x_2 \cdots x_r$ of a bounded number of elements of~$X$, such that the word length of each~$x_i$ is bounded by a constant times the word length of~$\gamma$. A.~Lubotzky, S.~Mozes, and M.S.~Raghunathan observed in 1993 that ${\rm SL}(n,{\mathbb Z})$ is quasi-isometrically boundedly generated by the elements of its natural ${\rm SL}(2,{\mathbb Z})$ subgroups. We generalize (a slightly weakened version of) this by showing that every $S$-arithmetic subgroup of an isotropic, almost-simple ${\mathbb Q}$-group is quasi-isometrically boundedly generated by standard ${\mathbb Q}$-rank-1 subgroups.}

\Keywords{arithmetic group; quasi-isometric; bounded generation; discrete subgroup}

\Classification{22E40; 20F65; 11F06}

\section{Introduction} \label{IntroSect}

A subset $X$ ``boundedly generates'' a group~$\Gamma$ if every element~$\gamma$ of~$\Gamma$ can be written as a~product $\gamma = x_1 x_2 \cdots x_r$ of a bounded number of elements of~$X$ (cf.~\cite[p.~203]{MR1278263}). This is a~very powerful notion in abstract group theory, but geometric group theorists may need the bounded generation to be ``quasi-isometric'', which means that the word length of every~$x_i$ is bounded by a~constant times the word length of~$\gamma$ (after passing to a finite-index subgroup). Lubotzky, Mozes, and Raghunathan \cite[Corollary~3]{MR1244421} observed in 1993 that $\SL(n,\ZZ)$ is quasi-isometrically boundedly generated by the elements of its natural $\SL(2,\ZZ)$ subgroups (but did not use this terminology). This implies that every finite-index subgroup~$\Gamma$ of $\SL(n,\ZZ)$ is quasi-isometrically boundedly generated by finitely many subgroups, each of which is the intersection of~$\Gamma$ with a conjugate of a natural $\SL(2,\ZZ)$ subgroup of $\SL(n,\ZZ)$.

We generalize this by showing that every arithmetic subgroup of an isotropic, almost-simple $\QQ$-group is quasi-isometrically boundedly generated by standard $\QQ$-rank-1 subgroups. The proof is a minor modification of an argument of Lubotzky, Mozes, and Raghunathan \cite[Section~4]{MR1828742} that establishes a slightly weaker statement. (One could describe their result by saying that it omits the word ``boundedly'' from the conclusion.)

\begin{defn} \label{QIBddGenDefn}
Let $\Gamma$ be an arithmetic subgroup of a semisimple algebraic $\QQ$-group~$\Ga$. (So~$\Gamma$ is commensurable with $\Ga(\ZZ)$.) We say that $\Gamma$ is \emph{quasi-isometrically boundedly generated by standard $\QQ$-rank-$1$ subgroups} if there exist constants $r = r(\Ga,\Gamma) \in \NN$ and $C = C(\Ga,\Gamma) \in \RR^+$, a~finite subset $\Gamma_0 = \Gamma_0(\Ga,\Gamma)$ of~$\Gamma$, and a~finite collection $\subgroups = \subgroups(\Ga,\Gamma)$ of Zariski closed subgroups of~$\Ga$, such that:
\begin{enumerate}\itemsep=0pt
\item %\label{QIBddGenDefn-product}
	 Every element~$\gamma$ of~$\Gamma$ can be written in the form $\gamma = x_1 x_2 \cdots x_r$, where, for each~$i$, we have either:
\begin{enumerate}\itemsep=0pt
\item %\label{QIBddGenDefn-product-sinL}
$x_i \in \La \cap \Gamma$, for some $\La \in \subgroups$, and $\log \|x_i\| \le C \log \|\gamma\|$, or
\item $x_i \in \Gamma_0$.
\end{enumerate}
\item %\label{QIBddGenDefn-L}
Each $\La \in \subgroups$ is a standard $\QQ$-rank-1 subgroup of~$\Ga$. This means that $\La$ is a connected, almost $\QQ$-simple subgroup of~$\Ga$, with $\rank_\QQ \La = 1$, and that there is a root~$\alpha$ of some maximal $\QQ$-split torus~$\Ta$ of~$\Ga$, such that the Lie algebra of~$\La$ is generated by the root spaces $\Lie G_\beta$ for $\beta \in \big\{{\pm} \alpha, \pm 2\alpha, \pm \frac{1}{2} \alpha \big\}$.
\end{enumerate}
\end{defn}

\begin{thm} \label{QIBddGenByRank1}Every arithmetic subgroup of an isotropic, almost-simple $\QQ$-group is quasi-isometrically boundedly generated by standard $\QQ$-rank-$1$ subgroups.
\end{thm}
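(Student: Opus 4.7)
The plan is to follow the strategy of LMR \cite{MR1828742}, who already prove the conclusion of Theorem~\ref{QIBddGenByRank1} minus the uniform bound on the number of factors~$r$, and then insert a bounded commutator-collection step to upgrade to the boundedly-generated version. I would first reduce to $\Gamma = \Ga(\ZZ)$ (or $\Ga(\ints_S)$ in the $S$-arithmetic setting): replacing~$\Gamma$ with a commensurable subgroup only affects the constants and the finite set~$\Gamma_0$. Fix a maximal $\QQ$-split torus $\Ta \subset \Ga$, its relative root system~$\Phi$, and a base of simple roots~$\Delta$, and take $\subgroups = \{\La_\alpha : \alpha \in \Delta\}$, where $\La_\alpha$ is the standard $\QQ$-rank-1 subgroup attached to~$\alpha$ as in Definition~\ref{QIBddGenDefn}.

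Next, apply reduction theory for arithmetic groups (Siegel sets) to produce, for each $\gamma \in \Gamma$, a decomposition $\gamma = \gamma_0 \cdot u \cdot t \cdot u'$, where $\gamma_0$ lies in a fixed finite subset $\Gamma_0 \subset \Gamma$, the element $t \in \Ta(\RR) \cap \Gamma$ lies in the closed positive Weyl chamber, and $u, u' \in \Ua \cap \Gamma$ for $\Ua$ the unipotent radical of a fixed minimal $\QQ$-parabolic subgroup containing~$\Ta$. Standard estimates yield
\begin{equation*}
 \log \|t\| + \log \|u\| + \log \|u'\| \le C_0 \log \|\gamma\| .
\end{equation*}
Each piece then decomposes into factors from~$\subgroups$: the torus splits as $t = \prod_{\alpha \in \Delta} t_\alpha$ with $t_\alpha \in \La_\alpha \cap \Gamma$, using that the simple coroots generate the cocharacter lattice of~$\Ta$ up to finite index; the unipotent pieces are treated via the Chevalley commutator formulas, rewriting each root-subgroup element $u_\beta \in \Ua_\beta$ with $\beta \notin \Delta$ as a product of commutators of elements of simple-root subgroups, each of which lies in some $\La_\alpha \cap \Gamma$. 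Polynomial growth of commutators in the norm keeps each $\log \|x_i\|$ within a constant multiple of $\log \|\gamma\|$.

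The main obstacle, and the place where the present argument must go beyond LMR, is this last step: bounding the total number of factors uniformly in~$\gamma$ while simultaneously maintaining $\log \|x_i\| \le C \log \|\gamma\|$. LMR tolerate an unbounded number of factors and track only norms, so the new ingredient must be a Hall commutator-collection expression whose length depends only on the nilpotency class of~$\Ua$ and the size of the root system, together with a verification that such an expression can be realized inside the $S$-arithmetic group~$\Ua \cap \Gamma$ with integrality and norm bounds preserved. A smaller technical point is to check that the Weyl-group adjustment needed to move $t$ into the positive Weyl chamber always contributes an element of the finite set~$\Gamma_0$, which follows from compactness of a fundamental domain for the Weyl group action on~$\Ta(\RR)$.
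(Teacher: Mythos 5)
Your central step---a decomposition $\gamma = \gamma_0 \cdot u \cdot t \cdot u'$ with $\gamma_0$ in a fixed finite set and $u, t, u'$ all lying in $\Gamma$---does not exist, and this is the crucial gap. Siegel sets give a fundamental domain for the action of $\Gamma$ on $G$, not a normal form for elements of $\Gamma$ itself. The Bruhat or Iwasawa decomposition of a typical $\gamma \in \Ga(\ZZ)$ has rational, not integral, factors: already for $\SL(3,\ZZ)$, an integral matrix whose lower-right entry is not $\pm 1$ has no expression as (finite set)$\cdot$(integral unipotent)$\cdot$(integral torus)$\cdot$(integral unipotent). This is why the argument in the paper never tries to factor $\gamma$ all at once. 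Instead it writes $\gamma$ rationally in the big cell $\Omega = \Ua^+\Pa^-$ (after multiplying by a finite set), and then runs an iterative scheme (Lemma~\ref{OneStep}): multiplying $\gamma$ on the left by carefully chosen integral elements of the rank-one subgroups $\Ga_i$, it drives the rational unipotent component $\la{u^+_i}(\gamma)$ into a finite set one root direction at a time, with the clockwise ordering of roots chosen precisely so that progress on earlier roots is never undone. Nothing like a one-shot ``integral normal form'' appears, and the norm estimates come out of the regular functions $\omega_\alpha$ and their integrality on $\Gamma$, not from reduction theory.

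Two further problems compound the gap. First, your torus step handles only the $\QQ$-split direction; but once you are inside a minimal parabolic, you still have to contend with the anisotropic Levi factor $\Ma$, whose integer points form an infinite group that is not generated by simple-coroot one-parameter subgroups. The paper addresses this with Lemma~\ref{LeviOK} and induction on $\rank_\QQ \Ga$, and the subgroups it needs there are not among your $\{\La_\alpha : \alpha \in \Delta\}$. Second, for a $\QQ$-rank-one subgroup with more than one cusp, multiplying by elements of $\La_\alpha \cap \Gamma$ cannot fully kill a rational unipotent; one is forced to tolerate finitely many conjugates of the rank-one subgroups (this is why $\subgroups$ in the paper contains groups of the form ${}^{\la{u}}\Ga_i$, not just the $\Ga_i$). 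Your proposal does not mention this. Finally, on the diagnosis of where LMR stops short: the obstruction they leave is not that they ``track only norms and tolerate unboundedly many factors'' in the sense your Hall-collection fix would address; it is that their order of annihilation allows one step to spoil the work of a previous one, so the count blows up. The paper's whole contribution is the clockwise order in $\eta(\Phi^+)$, which makes each root direction get annihilated exactly once.
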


\begin{rems} \label{MainRems}
\leavevmode
\begin{enumerate}\itemsep=0pt
\item In the special case where the arithmetic subgroup is $\SL(n,\ZZ)$, the Comptes Rendus note of Lubotzky, Mozes, and Raghunathan \cite[Corollary~3]{MR1244421} not only proves the full strength of our theorem, but also provides the much more explicit conclusion that the collection~$\subgroups$ of standard $\QQ$-rank-1 subgroups can be taken to be the set of natural copies of $\SL(2,\ZZ)$ in $\SL(n,\ZZ)$, that the length~$r$ of the word can be taken to be $n^2 - n$, and that no additional finite set~$\Gamma_0$ is required. (See Proposition~\ref{SLnZThm} below.)
	
\item \label{MainRems-wordlength}Assuming that $\rank_\RR \Ga \ge 2$ (which is the only nontrivial case of Definition~\ref{QIBddGenDefn}), a famous theorem of Lubotzky, Mozes, and Raghunathan \cite[Theorem~A]{MR1828742} tells us that the word length $\ell_\Gamma(\gamma)$ is within a bounded multiplicative constant of $\log \| \gamma \|$. Therefore, the bound on $\log \|x_i\|$ in Definition~\ref{QIBddGenDefn}(1a) tells us that $\sum\limits_{i=1}^r \ell_\Gamma(x_i) \le C \ell_\Gamma(\gamma)$, for some constant~$C$. This is the motivation for the use of the term ``quasi-isometric'' in Definition~\ref{QIBddGenDefn}.
\item See Section~\ref{SarithSect} for extensions of Theorem~\ref{QIBddGenByRank1} to the setting of $S$-arithmetic subgroups.
\end{enumerate}
\end{rems}

The Margulis arithmeticity theorem (cf.\ \cite[Theorem~5.2.1, p.~90]{Morris-IntroArithGrps}) implies that Theorem~\ref{QIBddGenByRank1} can also be stated in the language of Lie groups:

\begin{cor} \label{QIBddGenLie}
Let $\Gamma$ be a noncocompact, irreducible lattice in a connected, semisimple Lie group~$G$ with finite center. Then $\Gamma$ is quasi-isometrically boundedly generated by $\QQ$-rank-$1$ subgroups. More precisely, there exist constants $r = r(\Ga,\Gamma) \in \NN$ and $C = C(\Ga,\Gamma) \in \RR^+$, a finite subset $\Gamma_0 = \Gamma_0(G,\Gamma)$ of~$\Gamma$, and a finite collection $\subgroups = \subgroups(G,\Gamma)$ of closed, connected, semisimple subgroups of~$G$, such that:
	\begin{enumerate}\itemsep=0pt
\item[$1.$] %\label{QIBddGenLie-product}
	 Every element~$\gamma$ of~$\Gamma$ can be written in the form $\gamma = x_1 x_2 \cdots x_r$, where, for each~$i$, we have either:
\begin{enumerate}\itemsep=0pt
\item[$(a)$] %\label{QIBddGenLie-product-sinL}
		$x_i \in L \cap \Gamma$, for some $L \in \subgroups$, and $\log \|x_i\| \le C \log \|\gamma\|$,
		or
\item[$(b)$] $x_i \in \Gamma_0$.
		\end{enumerate}
\item[$2.$] For each $L \in \subgroups$, the intersection $L \cap \Gamma$ is an irreducible lattice in~$L$, with $\rank_\QQ (L \cap \Gamma) = 1$.

	\end{enumerate}
\end{cor}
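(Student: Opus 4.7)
\emph{Proof plan.} The corollary is a Lie-group reformulation of Theorem~\ref{QIBddGenByRank1}, obtained through the Margulis arithmeticity theorem. I would handle the cases $\rank_\RR G = 1$ and $\rank_\RR G \ge 2$ separately. When $\rank_\RR G = 1$, the conclusion is essentially trivial: set $r = 1$, $\Gamma_0 = \{e\}$, $\subgroups = \{G\}$, and observe that $\Gamma$ is itself an irreducible noncocompact lattice in $G$, necessarily of $\QQ$-rank~$1$ (by the usual convention in the non-arithmetic case, reflecting the presence of cusps).

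Assume now $\rank_\RR G \ge 2$, so that Margulis arithmeticity applies. It furnishes a connected, almost $\QQ$-simple algebraic group $\Ga$ together with a continuous surjective homomorphism $\phi\colon \Ga(\RR)^\circ \to G/Z(G)$ with compact kernel, such that $\phi(\Ga(\ZZ))$ is commensurable with the image of $\Gamma$ in $G/Z(G)$. Noncocompactness of $\Gamma$ forces $\Ga$ to be $\QQ$-isotropic, so Theorem~\ref{QIBddGenByRank1} applies to the finite-index arithmetic subgroup $\tilde\Gamma := \phi^{-1}\bigl(\Gamma Z(G)/Z(G)\bigr) \cap \Ga(\ZZ)$ and produces constants $r$, $C$, a finite set $\tilde\Gamma_0 \subset \Ga(\ZZ)$, and a collection $\tilde\subgroups$ of standard $\QQ$-rank-one subgroups of $\Ga$.

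The plan is then to push this data through $\phi$. Set $\subgroups := \bigl\{\phi(\La(\RR)^\circ) : \La \in \tilde\subgroups\bigr\}$, lifted to $G$ via the central projection; each element is a closed, connected, semisimple subgroup of $G$, and $L \cap \Gamma$ is commensurable with $\phi(\La(\ZZ) \cap \tilde\Gamma)$, hence is an irreducible lattice in $L$ of $\QQ$-rank~$1$ (because $\La$ is almost $\QQ$-simple of $\QQ$-rank $1$). Take $\Gamma_0$ to be $\phi(\tilde\Gamma_0)$ together with a finite set of coset representatives that absorb the commensurability defect between $\phi(\tilde\Gamma)$ and $\Gamma$; pulling back the decomposition $\tilde\gamma = \tilde x_1 \cdots \tilde x_r$ then yields $\gamma = x_1 \cdots x_r$ in~$\Gamma$.

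The main technical hurdle is verifying that the norm estimate $\log\|\tilde x_i\| \le C \log\|\tilde\gamma\|$ from Theorem~\ref{QIBddGenByRank1} (measured in the algebraic embedding of $\Ga$) transfers to the required $\log\|x_i\| \le C' \log\|\gamma\|$ in $G$. Because $\phi$ is induced by an $\RR$-morphism of algebraic groups modulo a compact kernel, the two matrix norms are polynomially comparable along $\phi$, so the log-norm inequality survives after absorbing the compact-fiber contribution into a larger constant~$C'$. The remaining verifications — that $L \cap \Gamma$ inherits irreducibility and that $\Gamma_0$ can be chosen finite — are routine bookkeeping.
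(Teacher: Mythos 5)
Your proposal is correct and takes the same route the paper intends: the paper gives no separate proof of this corollary but simply notes that it follows from Theorem~\ref{QIBddGenByRank1} via the Margulis arithmeticity theorem, and you flesh out exactly that translation (rank-one case handled trivially, arithmeticity gives a $\QQ$-isogeny-with-compact-kernel from $\Ga(\RR)^\circ$ onto $\Ad G$, the standard $\QQ$-rank-one subgroups and the word decomposition are pushed forward, and the $\log\|\cdot\|$ bound transfers because the norms on the two sides are coarsely equivalent, cf.\ Remark~\ref{MainRems}(\ref{MainRems-wordlength})). The only place to be slightly more careful is your assertion that $L\cap\Gamma$ is commensurable with $\phi(\La(\ZZ)\cap\tilde\Gamma)$: this does hold because $\La$ is a $\QQ$-closed subgroup (so $\La\cap\Ga(\ZZ)=\La(\ZZ)$ up to finite index) and $\ker\phi$ is compact, but it deserves a sentence rather than being folded into ``routine bookkeeping.''
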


\begin{rems}
In the statement of the corollary:
\begin{enumerate}\itemsep=0pt
	\item The notion of $\QQ$-rank is the natural extension that applies to all lattices in semisimple Lie groups, not only the arithmetic ones: non-arithmetic irreducible lattices arise only in real rank one, where the $\QQ$-rank is defined to be~$0$ for cocompact lattices, and to be~$1$ for non-cocompact lattices (see \cite[Definition~9.1.8, pp.~193--194]{Morris-IntroArithGrps}).
	\item $\| x \|$ can be taken to be the norm of~$x$ in a finite-dimensional matrix representation of~$G$ that has finite kernel, or $\log \|x\|$ could be replaced with either the word length $\ell_\Gamma(x)$, or the distance from the identity to~$x$, with respect to a left-invariant Riemannian metric on~$G$.
	\item Each subgroup in $\subgroups(G,\Gamma)$ may be assumed to be ``standard'', as in Dedinition~\ref{QIBddGenDefn}(2). This condition is a bit complicated to state in Lie-theoretic terms, but see Definition~\ref{StandardQInfiniteCenterDefn}.
	\item The assumption that $G$ has finite center can be eliminated (see Corollary~\ref{InfCenterBddGen}).
	\end{enumerate}
\end{rems}

\section[Idea of the proof: the example of $\SL(n,\pmb\ZZ)$]{Idea of the proof: the example of $\boldsymbol{\SL(n,\pmb\ZZ)}$} \label{SLnZSect}

This section is purely expository, and may therefore be passed over. It presents the proof in the (known) easy special case where $\Gamma = \SL(n,\ZZ)$, and then describes the main issues that are involved in generalizing the argument.

\begin{notation} \label{PrecNotation}
For $s,t \in \RR^+$, we write $s \prec t$ if $s$ is bounded by a polynomial function of~$t$: $s \le t^C + C$ for some constant $C > 0$. (The constant must be independent of $s$ and~$t$, but may depend on the parameter~$n$ in $\SL(n,\ZZ)$.) Equivalently, $\log s \le \max\{C' \log t, C'\}$, for some constant $C' > 0$.
\end{notation}

\begin{notation}
Let $L_{k,\ell}$ be the copy of $\SL(2,\ZZ)$ in $\SL(n,\pmb\ZZ)$ that is supported on the matrix entries $(k,k)$, $(k,\ell)$, $(\ell,k)$, and $(\ell,\ell)$.
\end{notation}

\begin{prop}[Lubotzky--Mozes--Raghunathan {\cite[Corollary~3]{MR1244421}}] \label{SLnZThm}
Let $r = n^2 - n$. For all $\gamma \in \SL(n, \ZZ)$, there exist $x_1,\ldots,x_r \in \SL(n, \ZZ)$, such that:
\begin{enumerate}\itemsep=0pt
\item[$1)$] $\gamma = x_1 x_2 \cdots x_r$,
\item[$2)$] for each~$i$, there exist $k,\ell$, such that $x_i \in L_{k,\ell}$,
	and
\item[$3)$] $\|x_i\| \prec \|\gamma\|$ for all~$i$.
	\end{enumerate}
\end{prop}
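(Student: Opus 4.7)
The plan is to prove the proposition by induction on $n$, carrying out a careful variant of Gaussian elimination along the first column and then the first row, and then recursing on the resulting $\SL(n-1,\ZZ)$ block. The key difficulty is keeping polynomial control on all intermediate norms, so that every factor $x_i$ we produce satisfies $\|x_i\|\prec\|\gamma\|$.

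\textbf{Step 1 (column reduction).} Write the first column of $\gamma$ as $(a_1,\ldots,a_n)^T$, so $\gcd(a_1,\ldots,a_n)=1$. For $i=2,\ldots,n$, I would inductively choose $h_i\in L_{1,i}$ via Bezout's identity so that left-multiplying by $h_i$ sends the current $(1,1)$-entry $d_{i-1}=\gcd(a_1,\ldots,a_{i-1})$ and $(i,1)$-entry $a_i$ to $\bigl(\gcd(d_{i-1},a_i),\,0\bigr)=(d_i,0)$. After $n-1$ such steps the first column becomes $e_1$, since $d_n=1$. The standard size-controlled extended Euclidean algorithm chooses $h_i$ with $\|h_i\|\prec\max(|d_{i-1}|,|a_i|)\le\|\gamma\|$.

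\textbf{Step 2 (row clearing).} After Step 1 the matrix has the block form $\left(\begin{smallmatrix}1 & v^T\\ 0 & M\end{smallmatrix}\right)$ with $v\in\ZZ^{n-1}$ and $M\in\SL(n-1,\ZZ)$. For each $i=2,\ldots,n$, I would right-multiply by the elementary unipotent $E_{1,i}(-v_i)\in L_{1,i}$ (i.e., the embedding of $\left(\begin{smallmatrix}1 & -v_i\\ 0 & 1\end{smallmatrix}\right)$ in positions $(1,i)$), which zeros out the $(1,i)$-entry and has norm $1+|v_i|\prec\|\gamma\|$.

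\textbf{Step 3 (induction).} The resulting matrix is $\left(\begin{smallmatrix}1 & 0\\ 0 & M\end{smallmatrix}\right)$ with $M\in\SL(n-1,\ZZ)$ of norm $\prec\|\gamma\|$. Applying the inductive hypothesis produces a factorization of $M$ of length $(n-1)^2-(n-1)$ by elements of $L_{k,\ell}$ with $2\le k<\ell\le n$, each of norm $\prec\|M\|\prec\|\gamma\|$; these embed as $L_{k,\ell}$-elements in the original $\SL(n,\ZZ)$. Reassembling, $\gamma$ equals the product of the $h_i^{-1}$ (in reverse order), the inductive factorization of the middle block, and the $E_{1,i}(v_i)$ (in reverse order). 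The total length is $2(n-1)+(n-1)(n-2)=n^2-n$, and every factor has norm $\prec\|\gamma\|$, which is what we want.

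\textbf{Main obstacle.} The delicate point is norm bookkeeping across the reduction. Although each individual $h_i$ is built from a pivot pair bounded by $\|\gamma\|$, left-multiplying by $h_i$ mixes rows $1$ and $i$ and can inflate other entries of the matrix. A careful telescoping shows that after Step 1 the whole matrix still has norm at most $\|\gamma\|^{O(n)}$, so the entries $v_i$ exposed in Step 2 remain polynomially bounded in $\|\gamma\|$, and the $M$ fed into the inductive step has norm $\|\gamma\|^{O(n^2)}$, which is still $\prec\|\gamma\|$. This keeps the relation $\prec$ stable through the recursion and is really the only nontrivial thing to check.
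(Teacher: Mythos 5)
Your proof is correct, and the essential idea---size-controlled $\SL(2,\ZZ)$ pivoting via Bezout, with polynomial bookkeeping of the norm inflation at each step---is the same as the paper's. The difference is organizational. The paper multiplies only on the left: it clears every below-diagonal entry column by column, using the observation that $x_{i,j}\in L_{i,j}$ affects only rows~$i$ and~$j$ (so it preserves the zeros already placed in earlier columns), and then writes the remaining unipotent upper-triangular matrix directly as a product of $n(n-1)/2$ elementary unipotents in the various $L_{k,\ell}$. You instead clear the first column by left-multiplication, the first row by right-multiplication, and recurse on the $(n-1)\times(n-1)$ block; both routes give $r = n^2 - n$ and the same $\prec\|\gamma\|$ bound. (As you note, the relation~$\prec$ permits the implied constant to depend on~$n$, so the polynomial blowup accumulated over the bounded depth of recursion is harmless; in fact after Step~1 the whole matrix, and hence~$M$, already has norm $\prec\|\gamma\|$, since Step~2 does not alter the lower-right block.) Your recursive version is arguably cleaner as a self-contained argument. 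The paper's one-sided, column-by-column version is chosen because it foreshadows the proof of Theorem~\ref{QIBddGenByRank1}, which annihilates the positive-root components $u_\alpha$ of a generic element one at a time by left-multiplying by elements of standard $\QQ$-rank-$1$ subgroups in a carefully chosen order, and never splits the elimination between left and right moves.
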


\begin{proof}[Proof \normalfont (Lubotzky--Mozes--Raghunathan)]
For each nonzero $v \in \ZZ^2$, it is easy to find $x \in \SL(2,\ZZ)$, such that $xv = \left[\begin{smallmatrix} + \\ 0 \end{smallmatrix}\right]$, and $\| x \| \prec \| v \|$.
	This implies there is $x_{2,1} \in L_{2,1}$, such that $(x_{2,1} \gamma)_{2,1} = 0$.
	Then, similarly, there is $x_{3,1} \in L_{3,1}$, such that $(x_{3,1} x_{2,1} \gamma)_{3,1} = 0$. It is important to note that multiplying on the left by $x_{3,1}$ does not affect the second row (indeed, it only affects the 1st row and the 3rd row), so we have $(x_{3,1} x_{2,1} \gamma)_{2,1} = (x_{2,1} \gamma)_{2,1} = 0$. Continuing in this way, we have $x_{4,1}, x_{5,1}, \ldots, x_{n,1}$, so that if we let
\[\gamma^{(1)} = x_{n,1} x_{n-1,1} \cdots x_{2,1} \gamma ,\]
then $\gamma^{(1)}_{i,1} = 0$ for $i = 2,3,\ldots, n$.
(We may also assume $\gamma^{(1)}_{1,1} > 0$. Since $\gamma^{(1)} \in \SL(n, \ZZ)$, it is easy to see that this implies $\gamma^{(1)}_{1,1} = 1$.)

Next, we work on the second column: for $i = 3,4, \ldots, n$, there exists $x_{i,2} \in L_{i,2}$, such that if $\gamma^{(2)} = x_{n,2} x_{n-1,2} \cdots x_{3,2} \gamma^{(1)}$, then $\gamma^{(2)}_{i,2} = 0$ for $i = 3,4,\ldots,n$. (Also, $\gamma^{(2)}_{2,2} = 1$.)
Continuing in this way, column-after-column, we obtain $\gamma^{(n-1)}$, such that $\gamma^{(n-1)}_{i,j} = 0$ for $i > j$. Also, $\gamma^{(n-1)}_{i,i} =1$ for all~$i$.

So $\gamma^{(n-1)}$ is a unipotent upper-triangular matrix. It is then easy to reduce to the identity matrix by multiplying by unipotent upper-triangular elements of the various $L_{k,\ell}$.
\end{proof}

Extending these ideas to a proof of Theorem~\ref{QIBddGenByRank1} encounters a few complications. All but the first are dealt with by the argument of Lubotzky, Mozes, and Raghunathan in \cite[Section~4]{MR1828742}.

\subsection*{The first complication}
In order to retain the progress that has been made in earlier steps, the matrix entries need to be annihilated in a carefully chosen order.
For example, suppose we have $\gamma_{2,1} = 0$. There exists $x \in L_{2,3}$, such that $(x \gamma)_{3,2} = 0$. However, it will probably not be the case that $(x \gamma)_{2,1} = 0$. So we should not annihilate the $(3,2)$-entry until after the $(3,1)$-entry has been annihilated.

Also, there may not be a convenient matrix representation of~$\Gamma$. However, the off-diagonal entries of matrices in $\SL(n,\ZZ)$ correspond to roots of the algebraic group, so, instead of annihi\-la\-ting ``matrix entries'', we will instead write a (generic) element of~$\Gamma$ in the form $\gamma = \bigl(\prod\limits_{\alpha\in \Phi^+} u_\alpha \bigr) \cdot p$, where $u_\alpha$ is in the root subgroup of~$\Ga$ corresponding to the positive root~$\alpha$, and $p$ is in a minimal parabolic $\QQ$-subgroup.
The product is taken with respect to an appropriately chosen order of the positive roots, and we will annihilate each~$u_\alpha$ one-by-one, in this order, by multiplying by an element of the standard $\QQ$-rank-1 subgroup~$\La_\alpha$ that corresponds to~$\alpha$.

When $\rank_\QQ \Ga = 2$, the root system is in~$\RR^2$, and we will use the clockwise ordering of the positive roots (or, in other words, left-to-right, if the positive roots are in the upper half-plane). For higher rank, we project the root system to a generic 2-dimensional plane, and then use the clockwise order in this 2-dimensional representation.

\begin{rem}\looseness=-1 The argument of Lubotzky, Mozes, and Raghunathan in \cite[Section~4]{MR1828742} annihilates the root elements in a different order; it allows the annihilation of a particular~$u_\alpha$ to have side effects that undermine some of the previous work. Thus, the inductive argument re-annihilates some roots an unbounded number of times. The only new idea in the present paper is the observation that the roots can be annihilated in an order that never requires a root to annihilated more than once.
\end{rem}

\subsection*{The second complication}
\looseness=-1 A general arithmetic subgroup of $\QQ$-rank one may have more than one cusp. This means that multiplying by an element of $\La_\alpha \cap \Gamma$ might not be able to annihilate $u_\alpha$. But there are only finitely many cusps, which means that $u_\alpha$ can be moved into a finite set. It turns out that each element of this set simply applies a conjugation to the calculations. Therefore, to deal with this issue, it suffices to allow (finitely many) conjugates of the $\QQ$-rank-one subgroups that we started with.

\subsection*{The third complication}
The above arguments reduce to the case where $\gamma$ is in a minimal parabolic $\QQ$-subgroup~$\Pa$. In $\SL(n,\ZZ)$, this implies (up to finite index) that $\gamma$ is in the unipotent radical of~$\Pa$, and it is easy to finish the proof from there. In general, however, the anisotropic part~$\Ma$ of the reductive Levi factor of~$\Pa$ may have infinitely many integer points.

Fortunately, this is a very minor issue. To deal with it, Lemma~\ref{LeviOK} points out that if $\rank_\QQ \Ga \ge 2$, then there are finitely many standard $\QQ$-subgroups $\La_1,\ldots,\La_k$ of~$\Ga$, such that $\rank_\QQ \La_i < \rank_\QQ \Ga$ for all~$i$, and (up to finite index) every element of $\Ma(\ZZ)$ can be efficiently written as the product of a bounded number of elements of $\La_1(\ZZ) \cup \La_2(\ZZ) \cup\cdots \cup \La_k(\ZZ)$. By induction on $\rank_\QQ \Ga$, we may assume that each~$\La_i$ is quasi-isometrically boundedly generated by $\QQ$-rank-1 subgroups. It is then easy to complete the proof.

\section{Preliminaries}

 We assume familiarity with the basic theory of algebraic groups over~$\QQ$.

 \begin{notation} \label{GaNotation} The symbol~$\Ga$ always denotes a simply connected, $\QQ$-isotropic, almost $\QQ$-simple algebraic group (except that $\Ga$ is not assumed to be simply connected in Lemma~\ref{InvtUnderIsogeny}, and $\QQ$~is replaced by a more general number field in Proposition~\ref{GeneralSArith}).
 \end{notation}

Although Definition~\ref{QIBddGenDefn} refers to a subgroup~$\Gamma$ of~$\Ga$, the following observation shows that it is actually a property of the $\QQ$-group~$\Ga$, rather than a property of the arithmetic group~$\Gamma$ (or the pair~$(\Ga,\Gamma)$).

\begin{lem} \label{BddCommensurable}\sloppy
Quasi-isometric bounded generation by $\QQ$-rank-$1$ subgroups is invariant under commensurability:
if $\Gamma_1$ is quasi-isometrically boundedly generated by $\QQ$-rank-$1$ subgroups, and~$\Gamma_2$ is a subgroup of~$\Ga$ that is commensurable to~$\Gamma_1$, then $\Gamma_2$ also has this property $($but perhaps with a different choice of the constants~$r$ and~$C$, the finite set~$\Gamma_0$, and the collection~$\subgroups)$.
\end{lem}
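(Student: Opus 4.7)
The plan is to reduce the commensurability statement to two one-sided cases by chaining through the common finite-index subgroup $\Gamma_1 \cap \Gamma_2$: (A)~pass from a group $\Gamma'$ having the property to an overgroup $\Gamma$ of finite index, and (B)~pass from $\Gamma_1$ having the property to a finite-index subgroup $\Gamma' \subseteq \Gamma_1$. Chaining (B) with $\Gamma' = \Gamma_1 \cap \Gamma_2$ and then (A) with $\Gamma = \Gamma_2$ covers the full commensurable case.

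For case~(A) -- which is essentially formal -- I would fix a finite set $T \subseteq \Gamma$ of coset representatives of~$\Gamma'$; given $\gamma \in \Gamma$, write $\gamma = t\gamma'$ with $t \in T$ and $\gamma' \in \Gamma'$, and apply the $\Gamma'$-decomposition to~$\gamma'$. Setting $\Gamma_0(\Gamma) := \Gamma_0(\Gamma') \cup T$, keeping $\subgroups(\Gamma) := \subgroups(\Gamma')$ (using $\La \cap \Gamma' \subseteq \La \cap \Gamma$), incrementing~$r$ by~$1$, and enlarging~$C$ to absorb the bounded contribution from $\log \|t^{-1}\|$ yields the required decomposition for~$\Gamma$.

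For case~(B) -- the main content -- I would first replace $\Gamma'$ by its core in~$\Gamma_1$, namely $\Gamma'' = \bigcap_{t \in T_0} t\Gamma' t^{-1}$ with $T_0$ a set of coset representatives of $\Gamma'$ in~$\Gamma_1$; this is of finite index in~$\Gamma'$ and \emph{normal} in~$\Gamma_1$, and by case~(A) it suffices to treat~$\Gamma''$. So assume $\Gamma'$ is normal in~$\Gamma_1$. For each $\La \in \subgroups(\Gamma_1)$, pick a finite set $T_\La \subseteq \La \cap \Gamma_1$ of coset representatives of $\La \cap \Gamma'$. Given $\gamma \in \Gamma'$, use the $\Gamma_1$-decomposition $\gamma = x_1 \cdots x_r$, and factor each $x_i \in \La_i \cap \Gamma_1$ as $x_i = t_i z_i$ with $t_i \in T_{\La_i}$ and $z_i \in \La_i \cap \Gamma'$ (absorbing each $x_i \in \Gamma_0(\Gamma_1)$ as a lone~$t_i$). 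Commuting every $t_i$ leftward past the subsequent~$z_j$'s yields the identity
\[
\gamma = s \cdot \tilde z_1 \tilde z_2 \cdots \tilde z_r, \qquad s = t_1 t_2 \cdots t_r, \qquad \tilde z_i = T_i^{-1} z_i T_i, \qquad T_i = t_{i+1} \cdots t_r.
\]
Normality of $\Gamma'$ in $\Gamma_1$ forces $\tilde z_i \in \Gamma'$, whence $s \in \Gamma'$ as well; both $s$ and each $T_i$ range over finite subsets of~$\Gamma_1$, so the enlarged collection $\subgroups(\Gamma') = \{T_i^{-1} \La T_i : \La \in \subgroups(\Gamma_1)\}$ and the enlarged finite set $\Gamma_0(\Gamma') \subseteq \Gamma'$ are finite. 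The norm bound propagates as $\log \|\tilde z_i\| \le \log \|x_i\| + O(1) \le C \log \|\gamma\| + O(1)$, absorbed by enlarging~$C$.

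\textbf{Main obstacle.} The delicate point is ensuring that each conjugate $T_i^{-1} \La T_i$ is still a \emph{standard} $\QQ$-rank-$1$ subgroup in the sense of Definition~\ref{QIBddGenDefn}(2). This works because $T_i \in \Gamma_1 \subseteq \Ga(\QQ)$: conjugation by~$T_i^{-1}$ carries any maximal $\QQ$-split torus $\Ta$ of~$\Ga$ to another maximal $\QQ$-split torus, and transports the root-space characterization of~$\La$ accordingly. The normality arranged by the core-passage step is precisely what lets conjugation by~$T_i$ preserve~$\Gamma'$, so that $\tilde z_i$ remains in~$\Gamma'$.
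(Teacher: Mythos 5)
Your proof is correct and takes essentially the same route as the paper: both handle the finite-index overgroup case formally, then reduce the subgroup case to a normal finite-index subgroup and conjugate the factors by elements of a finite subset of~$\Gamma_1$, using normality to keep everything inside~$\Gamma_2$ (your~$\Gamma'$). The only difference is bookkeeping in the normal case: you commute the coset representatives $t_i$ forward to a single prefix~$s$, conjugating each $z_i$ by the trailing product $T_i = t_{i+1}\cdots t_r$, whereas the paper inserts telescoping factors $\lambda_{i-1}^{-1}\lambda_{i-1}$ chosen so that each conjugator is a single element of a fixed finite set~$\Lambda_0$ of coset representatives -- the two bookkeeping schemes yield equivalent conclusions.
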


\begin{proof}
We may assume that $\Gamma_2$ is either a finite-index subgroup of~$\Gamma_1$, or a~finite-index supergroup of~$\Gamma_1$. We apply standard arguments from geometric group theory to each of these cases.

If $\Gamma_2$ is a finite-index supergroup of~$\Gamma_1$, let $\Lambda_0$ be a set of coset representatives of $\Gamma_2 / \Gamma_1$. Then every element of $\Gamma_2$ is of the form $\lambda x_1 x_2 \cdots x_r$, with $\lambda \in \Lambda_0 \subseteq \Gamma_0(\Ga,\Gamma_2)$, and either $x_i \in \Gamma_0(\Ga,\Gamma_1)$ or $x_i \in \La \cap \Gamma_1 \subseteq \La \cap \Gamma_2$, with $\La \in \subgroups(\Ga,\Gamma_1)$. Therefore, we may let $\Gamma_0(\Ga,\Gamma_2) = \Gamma_0(\Ga,\Gamma_1) \cup \Lambda_0$,
$\subgroups(\Ga,\Gamma_2) = \subgroups(\Ga,\Gamma_1)$ and $r(\Ga,\Gamma_2) = r(\Ga,\Gamma_1) + 1$.

We now consider the possibility that $\Gamma_2$ is a~finite-index subgroup of~$\Gamma_1$. By passing to a~further subgroup, we may assume that $\Gamma_2$ is normal in~$\Gamma_1$ (and has finite index).
Let $\Lambda_0$ be a symmetric finite subset of~$\Gamma_1$ that contains a set of coset representatives of $\Gamma_1/\Gamma_2$ (and also contains the identity element). We know that every element~$\gamma$ of~$\Gamma_2$ is of the form $\gamma = x_1 x_2 \cdots x_r$, with each $x_i$ either in some $\La \cap \Gamma_1$ or in $\Gamma_0(\Ga,\Gamma_1)$. By enlarging $\Gamma_0(\Ga,\Gamma_1)$ to include a set of coset representatives of each $(\La \cap \Gamma_1)/(\La \cap \Gamma_2)$ (and doubling~$r$), we may assume each $x_i$ is either in some $\La \cap \Gamma_2$ or in $\Gamma_0(\Ga,\Gamma_1)$.
Let
\begin{gather*}
\Gamma_0(\Ga,\Gamma_2) = \bigl( \Lambda_0 \cdot \Gamma_0(\Ga,\Gamma_1) \cdot \Lambda_0 \bigr) \cap \Gamma_2,\\
\subgroups(\Ga,\Gamma_2) = \big\{ \lambda \La \lambda^{-1} \,|\, \La \in \subgroups(\Ga,\Gamma_1), \, \lambda \in \Lambda_0 \big\} .
\end{gather*}
For each $i$, there exists $\lambda_i \in \Lambda_0$, such that $x_1 x_2 \cdots x_i \lambda_i \in \Gamma_2$. Note that if $x_i \notin \Gamma_0(\Ga,\Gamma_1)$, then we have
	\[ x_1 x_2 \cdots x_i \lambda_{i-1}
	= x_1 x_2 \cdots x_{i-1} \lambda_{i-1} \cdot \lambda_{i-1}^{-1} x_i \lambda_{i-1}
	\in \Gamma_2 \cdot \lambda_{i-1}^{-1} \Gamma_2 \lambda_{i-1}
	= \Gamma_2
	\]
(since $\Gamma_2 \triangleleft \Gamma_1$). Therefore, we may assume $\lambda_i = \lambda_{i-1}$ whenever $x_i \notin \Gamma_0(\Ga,\Gamma_1)$. We may also assume $\lambda_r = 1$, since $x_1 x_2 \cdots x_r = \gamma \in \Gamma_2$.
Define
	\[ x_i' = \lambda_{i-1}^{-1} x_i \lambda_i
	\qquad \text{(where $\lambda_0 = 1$)} .\]
Then each $x_i'$ is either in $\La \cap \Gamma_2$ for some $\La \in \subgroups(\Ga,\Gamma_2)$, or in $\Gamma_0(\Ga,\Gamma_2)$,
	and we have
		\begin{gather*}
		\gamma
		 = x_1 x_2 \cdots x_r
		= \big(\lambda_0^{-1} x_1 \lambda_1\big) \big(\lambda_1^{-1} x_1 \lambda_2\big)
		\cdots \big(\lambda_{r-1}^{-1} x_r \lambda_r\big) \lambda_r^{-1}
		= x_1' x_2' \cdots x_r'
		 \end{gather*}
since $\lambda_r = 1$.
\end{proof}

\begin{rem}The above proof shows that we may choose every element of $\subgroups(\Ga,\Gamma_2)$ to be $\Gamma_1$-conjugate to an element of $\subgroups(\Ga,\Gamma_1)$.
\end{rem}

Since the image of an arithmetic subgroup under an isogeny is an arithmetic subgroup of the image, a similar argument yields the following result (which explains why we may assume that~$\Ga$ is simply connected in Notation~\ref{GaNotation}):

\begin{lem} \label{InvtUnderIsogeny}
Quasi-isometric bounded generation by $\QQ$-rank-$1$ subgroups is invariant under isogeny:
the arithmetic subgroups of~$\Ga$ are quasi-isometrically boundedly generated by $\QQ$-rank-$1$ subgroups if and only if the arithmetic subgroups of the universal cover~$\cover\Ga$ are quasi-isometrically boundedly generated by $\QQ$-rank-$1$ subgroups.
\end{lem}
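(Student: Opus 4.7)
The plan is to transport the bounded-generation decomposition through the central isogeny $\pi\colon \cover\Ga \to \Ga$ (with finite central kernel $F$), in a manner that closely parallels the proof of Lemma~\ref{BddCommensurable}. By Lemma~\ref{BddCommensurable}, it suffices to work with a matched pair of arithmetic subgroups $\cover\Gamma \subseteq \cover\Ga(\QQ)$ and $\Gamma = \pi(\cover\Gamma) \subseteq \Ga(\QQ)$, since $\pi$ carries arithmetic subgroups to arithmetic subgroups of finite index. Two ingredients about $\pi$ will be used throughout. First, because $\pi$ is a polynomial morphism with finite kernel between affine $\QQ$-groups, one has $\|\pi(x)\| \prec \|x\|$ and $\|x\| \prec \|\pi(x)\|$ for $x \in \cover\Ga(\QQ)$. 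Second, the differential $d\pi$ is a Lie algebra isomorphism carrying $\QQ$-split tori to $\QQ$-split tori and root spaces to root spaces, so $\pi$ induces a bijection between the standard $\QQ$-rank-$1$ subgroups of $\cover\Ga$ and those of $\Ga$, via $\cover\La \leftrightarrow \pi(\cover\La)$ (and $\pi^{-1}(\La) = \cover\La \cdot F$, where $\cover\La$ is the identity component of $\pi^{-1}(\La)$).

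For the easy direction ``$\cover\Ga$ has the property $\Rightarrow \Ga$ has the property'', I would, given $\gamma \in \Gamma$, pick a lift $\cover\gamma \in \cover\Gamma$ (at most $|F|$ choices, so existence is immediate), note that $\|\cover\gamma\| \prec \|\gamma\|$ by the norm comparison, decompose $\cover\gamma = \cover{x}_1 \cdots \cover{x}_r$ using the hypothesis in $\cover\Ga$, and set $x_i = \pi(\cover{x}_i)$. Taking $\subgroups = \{\pi(\cover\La) : \cover\La \in \cover\subgroups\}$ and $\Gamma_0 = \pi(\cover\Gamma_0)$, the required bound on $\log\|x_i\|$ follows by chaining together the two $\prec$-inequalities above.

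The reverse direction is where the real bookkeeping happens. Given $\cover\gamma \in \cover\Gamma$, set $\gamma = \pi(\cover\gamma)$, apply the hypothesis in $\Ga$ to obtain $\gamma = x_1 \cdots x_r$, and lift each factor: for $x_i \in \La_i \cap \Gamma$ choose any $\cover{x}_i \in \cover\La_i$ with $\pi(\cover{x}_i) = x_i$ (possible since $\pi \colon \cover\La_i \to \La_i$ is surjective), and for $x_i \in \Gamma_0$ choose $\cover{x}_i$ in the finite preimage $\pi^{-1}(\Gamma_0)$. The main obstacle is that such a lift $\cover{x}_i$ a priori only lies in $\cover\La_i \cap \pi^{-1}(\Gamma) = \cover\La_i \cap (\cover\Gamma \cdot F)$, not necessarily in $\cover\La_i \cap \cover\Gamma$, and the product $\cover{x}_1 \cdots \cover{x}_r$ differs from $\cover\gamma$ by an element of the finite kernel $F$. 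Both discrepancies are controlled by the same device used in Lemma~\ref{BddCommensurable}: write $\cover{x}_i = \cover{y}_i \eta_i$ with $\cover{y}_i \in \cover\La_i \cap \cover\Gamma$ and $\eta_i$ drawn from a fixed finite set of coset representatives of $(\cover\La_i \cap \pi^{-1}(\Gamma))/(\cover\La_i \cap \cover\Gamma)$, then absorb all $\eta_i$'s and the leftover element of $F$ into an enlarged $\cover\Gamma_0$. As in Lemma~\ref{BddCommensurable}, one conjugates the remaining factors through these coset representatives so that each $\cover{y}_i$ still lies in a standard $\QQ$-rank-$1$ subgroup, at the cost of enlarging $\cover\subgroups$ by finitely many $\cover\Gamma$-conjugates of each original element and at most doubling $r$. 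The norm bounds again transfer through the $\prec$-comparisons, which completes the lemma.
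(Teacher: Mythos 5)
Your overall strategy—reduce to the matched pair $\cover\Gamma$, $\Gamma = \pi(\cover\Gamma)$ via Lemma~\ref{BddCommensurable}, use the two-sided norm comparison $\|\pi(x)\| \prec \|x\| \prec \|\pi(x)\|$ coming from $\pi$ being a finite morphism, and push factors back and forth through $\pi$—is the right one, and it is what the paper means when it says ``a similar argument'' to Lemma~\ref{BddCommensurable} applies. The easy direction and the norm bookkeeping are fine. But the hard direction has a gap you should not gloss over.

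When you lift $x_i \in \La_i \cap \Gamma$ to $\cover{x}_i \in \cover\La_i$, that lift need not be a $\QQ$-point (let alone lie in $\cover\Gamma$): $\pi$ is surjective as a morphism of algebraic groups, but $\pi\colon \cover\La_i(\QQ) \to \La_i(\QQ)$ is not surjective in general (think of $\SL_2 \to \PGL_2$, where an element of $\PGL_2(\ZZ)$ of determinant $-1$ has only the irrational preimages $\pm\operatorname{diag}(i,-i)$). Consequently the coset representatives $\eta_i$ of $(\cover\La_i \cap \pi^{-1}(\Gamma))/(\cover\La_i \cap \cover\Gamma)$ that you invoke need not lie in $\cover\Gamma$—indeed, any such representative that did lie in $\cover\Gamma$ would already be in $\cover\La_i \cap \cover\Gamma$ and hence represent the trivial coset. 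So the step ``absorb all $\eta_i$'s $\ldots$ into an enlarged $\cover\Gamma_0$'' is not legitimate as written, because Definition~\ref{QIBddGenDefn} requires $\cover\Gamma_0 \subseteq \cover\Gamma$. The same issue infects the claim that the conjugated subgroups are $\cover\Gamma$-conjugates.

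The repair uses exactly the feature of the situation you did not exploit: $F = \ker\pi$ is \emph{central}. Each $\eta_i$ lies in $\cover\Gamma F$, so $\eta_i = z_i f_i$ with $z_i \in \cover\Gamma$ from a fixed finite set and $f_i \in F$. Because $F$ is central, in the product $\cover{y}_1\eta_1\cover{y}_2\eta_2\cdots\cover{y}_r\eta_r$ the $f_i$'s slide to the right, giving $\cover\gamma = \cover{y}_1 z_1 \cover{y}_2 z_2 \cdots \cover{y}_r z_r \cdot (f_1\cdots f_r f^{-1})$; since every other factor is in $\cover\Gamma$ and $\cover\gamma \in \cover\Gamma$, the tail is in $F \cap \cover\Gamma$ and hence is a legitimate element of $\cover\Gamma_0$. (Also, centrality of $F$ is what makes conjugation by $\eta_i$ coincide with conjugation by $z_i \in \cover\Gamma$, if you prefer to keep the conjugation formulation.) Alternatively and more cleanly, one can avoid the irrational lift entirely by first noting that $\pi(\cover\La_i \cap \cover\Gamma)$ is a finite-index subgroup of $\La_i \cap \Gamma$ (both are arithmetic in $\La_i$), replacing $\La_i \cap \Gamma$ by this finite-index subgroup exactly as in the second half of Lemma~\ref{BddCommensurable}, and only then lifting; the lift then automatically lands in $\cover\La_i \cap \cover\Gamma$.
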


Let $\Gamma$ be an arithmetic subgroup of~$\Ga$. In order to prove Theorem~\ref{QIBddGenByRank1}, we wish to show that~$\Gamma$ is quasi-isometrically boundedly generated by standard $\QQ$-rank-1 subgroups.

\begin{notation} \label{QIBddGenNotation}\leavevmode
	\begin{enumerate}\itemsep=0pt
\item %\label{QIBddGenNotation-G(Z)}
	We assume $\Gamma = \Ga(\ZZ)$ (see Lemma~\ref{BddCommensurable}).
\item %\label{QIBddGenNotation-T}
	Let $\Ta$ be a maximal $\QQ$-split torus of~$\Ga$. We may assume $\dim \Ta \ge 2$ (in other words, $\rank_\QQ \Ga \ge 2$), for otherwise it is obvious that $\Gamma$ has the desired property (by letting $\subgroups = \{\Ga\}$, $r = C = 1$, and $\Gamma_0 = \varnothing$).
\item Let $\Phi$ be the $\QQ$-root system of~$\Ga$ corresponding to~$\Ta$.
	(So $\Phi$ is a subset of the character group~$\Ta^*$.)
\item Let $\eta \colon \Ta^* \otimes \RR \to \CC$ be an $\RR$-linear map, such that $\eta(\Phi)$ is disjoint from~$\RR$, and $\eta$~is injective on the set $\RR \Phi$, which is a finite union of lines through the origin. (These conditions hold for a generic choice of~$\eta$.)
\item Let $\Phi^+ = \Phi \cap \eta^{-1}(\hyper)$, where $\hyper \subset \CC$ is the (open) upper half-plane.
Since $\eta^{-1}(\hyper)$ is an open half-space in $\Ta^* \otimes \RR$ (and its boundary $\eta^{-1}(\RR)$ is disjoint from~$\Phi$), we know that $\Phi^+$ is a choice of positive roots for~$\Phi$.
\item Let $\Phi^+_1, \Phi^+_2,\ldots,\Phi^+_k$ be a list of the equivalence classes of positive roots, where two positive roots are equivalent if they are scalar multiples of each other. We specify the ordering of this list by requiring that
		\[ \text{the rays $\RR^+ \eta(\Phi^+_1)$, $\RR^+ \eta(\Phi^+_2), \ldots$, $\RR^+ \eta(\Phi^+_k)$ are in clockwise order} \]
	in the upper half-plane.
	(By convention, we let $\RR^+ \eta(\Phi^+_0) = \RR^-$ and $\RR^+ \eta(\Phi^+_{k+1}) = \RR^+$, so $\Phi^+_0 = \Phi^+_{k+1} = \varnothing$.)
	Let $\Phi_i = \Phi^+_i \cup - \Phi^+_i$ for $0 \le i \le k + 1$.
	
\item For each $i \in \{0,1,2,\ldots,k+1\}$, the difference $\CC \smallsetminus \RR \eta(\Phi^+_i)$ is the union of two open half-planes.
\begin{itemize}\itemsep=0pt
\item The half-plane on the \emph{right} of $\eta(\Phi^+_i)$ contains $\eta(\Phi^+_j)$ for $j > i$ (unless $i = 0$ and $j = k+1$), whereas
\item the half-plane on the \emph{left} contains $\eta(\Phi^+_j)$ for $j < i$ (unless $i = k + 1$ and $j = 0$).
\end{itemize}
We say that a root~$\phi$ is to the \emph{left} of~$\Phi_i$ if $\eta(\phi)$ is in the half-plane that is on the left of~$\eta(\Phi^+_i)$; conversely, $\phi$ is to the \emph{right} of~$\Phi_i$ if $\eta(\phi)$ is in the half-plane that is on the right of~$\eta(\Phi^+_i)$
		\begin{alignat*}{3}
		& \ra{\Phi_i}= \{ \phi \in \Phi \,|\, \text{$\phi$ is to the right of $\Phi^+_i$} \},
		\qquad &&
		\ra{\Phi^+_i}= \ra{\Phi_i} \cap \Phi^+,& \\
		&\la{\Phi_i}= \{ \phi \in \Phi \,|\, \text{$\phi$ is to the left of $\Phi^+_i$} \},
		\qquad &&
		\la{\Phi^+_i}= \la{\Phi_i} \cap \Phi^+		.&
\end{alignat*}
Note that each of these is a closed set of roots. (That is, if $\alpha$ and~$\beta$ are elements of one of these sets, and $\alpha + \beta \in \Phi$, then $\alpha + \beta$ is also in that same set.)
Furthermore, $\Phi_i^+ \cup \ra{\Phi_i}$ is a~choice of positive roots for~$\Phi$, for $1 \le i \le k$.
Also note that $\la{\Phi^+_{i+1}}$ is the disjoint union of~$\la{\Phi^+_i}$ and~$\Phi^+_i$, for $1 \le i \le k$, and that
	 $\la{\Phi^+_1} = \ra{\Phi^+_k} = \varnothing$ and $\ra{\Phi^+_0} = \la{\Phi^+_{k+1}} = \Phi^+$.
\item For $i = 1,2,\ldots,k$, let $\la{\Ua_i^+}$, $\Ua_i^+$, $\ra{\Ua_i^+}$, and $\ra{\Ua_i}$ be the (unipotent) $\QQ$-subgroups of~$\Ga$ whose Lie algebras are spanned by the root spaces corresponding to the roots in $\la{\Phi_i^+}$, $\Phi_i^+$, $\ra{\Phi_i^+}$, and $\ra{\Phi_i}$, respectively.
\item Let $\Ua^+$ be the maximal unipotent $\QQ$-subgroup of~$\Ga$ that corresponds to~$\Phi^+$, so we have $\Ua^+ = \ra{\Ua_i^+} \Ua^+_i \la{\Ua_i^+}$, and the corresponding product map $\ra{\Ua_i^+} \times \Ua^+_i \times \la{\Ua_i^+} \to \Ua^+$ is an isomorphism of varieties, for $1 \le i \le k$ \cite[Proposition~3.11, pp.~77--78]{BorelTits-GrpRed}.
	
\item Let $\Pa^-$ be the minimal parabolic $\QQ$-subgroup of~$\Ga$ that contains~$\Ta$ and is \emph{opposite} to~$\Ua^+$ (so its unipotent radical $\unip \Pa^-$ is spanned by the roots in $-\Phi^+$).

\item For $i \in \{1,2,\ldots,k\}$, we let $\Ga_i$ be the standard $\QQ$-rank-one subgroup whose Lie algebra is generated by the root spaces corresponding to roots in $\Phi_i$. Note that $\Ga_i$ has a parabolic $\QQ$-subgroup $\Pa_i^- = \Pa^- \cap \Ga_i$ that is opposite to~$\Ua_i^+$.

\item Let $\Omega = \Ua^+ \Pa^-$, so $\Omega$ is a Zariski-open, dense subset of~$\Ga$, and the natural product map $\Ua^+ \times \Pa^- \to \Omega$ is an isomorphism of varieties \cite[Proposition~4.10(d), pp.~88--89]{BorelTits-GrpRed}. For $g \in \Omega(\QQ)$, we may write
		\[ g = u^+(g) p^-(g) \qquad \text{with} \quad u^+(g) \in \Ua^+(\QQ) \quad \text{and} \quad p^-(g) \in \Pa^-(\QQ) . \]
Furthermore, we write
	\[ u^+(g) = \la{u^+_i}(g) u^+_i(g) \ra{u^+_i}(g) , \qquad \text{with} \quad \la{u^+_i}(g) \in \la{\Ua_i^+}, \quad u^+_i(g) \in \Ua_i^+, \quad \ra{u^+_i}(g) \in \ra{\Ua_i^+} .\]
	
	\item As mentioned in Notation~\ref{PrecNotation}, for $s,t \in \RR^+$, we write $s \prec t$ when there is a constant~$C$, such that $\log s \le \max\{C, C \log t\}$. The constant~$C$ may depend on~$\Ga$, and may also depend on the choices made above (such as the $\QQ$-split torus~$\Ta$ and the $\RR$-linear map~$\eta$), and may also depend on choices made the course of the proofs below (such as the dominant weight~$\lambda_\alpha$ in Definition~\ref{omegaDefn} and the finite-index subgroup~$\Gamma_i$ and finite set~$F_{i+1}$ in Lemma~\ref{OneStep}). Of course, however, $C$~cannot depend on $s$ and~$t$.
\end{enumerate}
\end{notation}

\begin{assumption} \label{AbsSimple}Theorem~\ref{QIBddGenByRank1} may be of interest to non-algebraists, so, to eliminate some algebraic technicalities, we will henceforth assume that~$\Ga$ is absolutely almost simple. This means we are assuming that the Lie group $G = \Ga(\RR)$ is almost simple and is not a complex Lie group. For example, $G$ can be $\SL(n,\RR)$ or $\SO(p,q)$ (with $p + q \ge 5$) or $\SU(p,q)$ or $\Sp(2n,\RR)$ or $\Sp(p,q)$ (but $G$ cannot be a group such as $\SL(n,\RR) \times \SL(n,\RR)$ that has more than one simple factor, and cannot be a complex group, such as $\SL(n,\CC)$ or $\SO(n,\CC)$).
(Actually, $\SO(p,q)$ is not simply connected as an algebraic group, so, although the theorem applies to it, the proof needs to consider the corresponding spin group, which is a double cover.)
See Remark~\ref{ChangeForK} for minor modifications of the proof that remove this restriction.
\end{assumption}

To prove Theorem~\ref{QIBddGenByRank1}, we need to show that each $\gamma \in \Gamma$ can be written as an appropriate product $\gamma = x_1x_2\cdots x_r$. The upshot of the following easy lemma (combined with induction on $\rank_\QQ \Ga$) is that it will suffice to write $\gamma = x_1x_2\cdots x_r m$, where $m$~is in a Levi factor of some member of a finite collection of proper parabolic $\QQ$-subgroups of~$\Ga$.

\begin{lem} \label{LeviOK}
Recall that $\rank_\QQ \Ga \ge 2$ $($see Notation~{\rm \ref{QIBddGenNotation}(2))}.
Let $\Ma$ be a $($reductive$)$ Levi factor of some proper parabolic $\QQ$-subgroup of~$\Ga$. Then there is a finite set~$\subgroups$ of subgroups of~$\Ga$, a~finite subset~$\Gamma_0$ of~$\Ma(\ZZ)$, and $r \in \NN$, such that
\begin{enumerate}\itemsep=0pt
\item[$1)$] each $\La \in \subgroups$ is an isotropic almost-simple factor of a Levi factor of a parabolic $\QQ$-subgroup of~$\Ga$, such that $1 \le \rank_\QQ \La < \rank_\QQ \Ga$, and
\item[$2)$] every element~$\gamma$ of $\Ma(\ZZ)$ can be written in the form $\gamma = x_1 x_2 \cdots x_r$, where, for each~$i$, we have either:
\begin{itemize}\itemsep=0pt
\item $x_i \in \Gamma_0$, or
\item $x_i \in \La(\ZZ)$, for some $\La \in \subgroups$, and $\|x_i\| \prec \|\gamma\|$.
\end{itemize}
\end{enumerate}
\end{lem}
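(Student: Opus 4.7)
The plan is to decompose $\Ma$ into its connected center and the almost-$\QQ$-simple factors of its derived subgroup, handle each piece separately, and assemble the pieces via Lemma~\ref{BddCommensurable}. Write the almost direct product decomposition $\Ma = Z \cdot \Ma_1 \cdots \Ma_s$, where $Z = Z(\Ma)^\circ$ is a $\QQ$-torus and the $\Ma_j$ are the almost-$\QQ$-simple factors of $[\Ma,\Ma]$. Since commensurability preserves the desired property (Lemma~\ref{BddCommensurable}), we may replace $\Ma(\ZZ)$ by the product $Z(\ZZ) \cdot \Ma_1(\ZZ) \cdots \Ma_s(\ZZ)$, absorbing any coset corrections into~$\Gamma_0$, so it suffices to produce bounded-length product decompositions of each factor's integer points.

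For each \emph{isotropic} $\Ma_j$ (i.e., $\rank_\QQ \Ma_j \ge 1$), one has $\rank_\QQ \Ma_j \le \rank_\QQ [\Ma,\Ma] < \rank_\QQ \Ga$ because $\Pa$ is a proper parabolic, so $\Ma_j$ is itself eligible for inclusion in~$\subgroups$: any $\gamma \in \Ma_j(\ZZ)$ is a word of length~$1$ with $\|x_1\| = \|\gamma\|$. For each \emph{anisotropic} $\Ma_j$, the hypothesis $\rank_\QQ \Ga \ge 2$ lets us enlarge~$\Pa$ by adjoining an additional simple $\QQ$-root to produce a still-proper parabolic whose Levi has an isotropic almost-$\QQ$-simple factor~$\La_j$ containing~$\Ma_j$; then $\Ma_j(\ZZ) \subseteq \La_j(\ZZ)$ with $\|\gamma\|$ preserved, and we add~$\La_j$ to~$\subgroups$.

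For the central torus~$Z$, write $Z = Z_s \cdot Z_a$ as an almost direct product of its maximal $\QQ$-split subtorus~$Z_s$ and its maximal $\QQ$-anisotropic subtorus~$Z_a$. Since (after conjugation) $Z_s$ lies in the maximal $\QQ$-split torus~$\Ta$, $Z_s(\ZZ) \subseteq \Ta(\ZZ)$ is finite, and we absorb it into~$\Gamma_0$. For~$Z_a$ we apply the same enlargement construction as for the anisotropic~$\Ma_j$: $Z_a$ is contained in~$\Ma$ and in the Levi of any enlargement of~$\Pa$, so we choose the enlargement so that the relevant isotropic simple factor~$\La$ contains~$Z_a$, and we reduce to expressing elements of~$Z_a(\ZZ)$ as bounded products from such $\La(\ZZ)$. (In an $\SL_3$-style example, a diagonal element factors as a bounded product of diagonal elements in the $\SL_2$-blocks of distinct maximal parabolics.)

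The norm bound $\|x_i\| \prec \|\gamma\|$ then follows from standard estimates: projections onto factors of an almost direct product are induced by $\QQ$-morphisms and so polynomial in matrix entries, and the chosen factorization of torus elements is similarly polynomial. The main obstacle is the enlargement argument for anisotropic subgroups: verifying that every anisotropic $\Ma_j$ (and every anisotropic~$Z_a$) arising from the Levi of a proper parabolic of~$\Ga$ embeds in an isotropic almost-$\QQ$-simple factor of the Levi of some still-proper parabolic of~$\Ga$. The hypothesis $\rank_\QQ \Ga \ge 2$ is essential: it guarantees both that the enlarged parabolic does not collapse to all of~$\Ga$ and that the relative Tits index has room for the required isotropic simple factor.
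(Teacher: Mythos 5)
Your overall plan matches the paper's: decompose $\Ma$ into its central torus and the almost-$\QQ$-simple factors of $[\Ma,\Ma]$, handle the isotropic factors directly (they already satisfy condition~(1)), and deal with the anisotropic factors by a Tits-index argument that uses $\rank_\QQ\Ga\ge 2$ to find a different proper parabolic whose Levi has an isotropic almost-simple factor absorbing the anisotropic piece. The treatment of the anisotropic $\Ma_j$ is essentially the argument the paper gives (the paper phrases it by first reducing to $\Pa$ maximal, corresponding to a circled vertex $\beta$, and then passing to the maximal parabolic associated to another circled vertex $\phi$, noting that $\Ma_j$'s component together with $\beta$ lies in one connected component of $\quot{\mathcal{D}}\smallsetminus\{\quot{\phi}\}$, which is isotropic because it contains the circled vertex $\beta$).

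The genuine gap is in the central-torus step. You assert that the anisotropic part $Z_a$ of $Z(\Ma)^\circ$ can be made to lie inside a \emph{single} isotropic almost-$\QQ$-simple factor $\La$ of the Levi of some still-proper parabolic. That claim is not justified and is false in general: when one passes to a larger Levi, the torus $Z_a$ typically sits diagonally across the new almost-simple factors and the new center rather than inside any one factor. (Your $\SL_3$ illustration does not test this, since the center of a maximal Levi in $\SL_n$ is split, so $Z_a$ is trivial there; the parenthetical about ``bounded products of diagonal elements in $\SL_2$-blocks of distinct maximal parabolics'' is closer to what is actually needed, but you do not carry it out.) The paper instead works with the anisotropic part $\Sa^a$ of a maximal $\QQ$-torus $\Sa\supseteq Z(\Ma)^\circ$, and for each simple root $\alpha$ defines $\Sa_\alpha=(\langle\Ua_\alpha,\Ua_{-\alpha}\rangle\cap\Sa)^\circ$; using that $\Ga$ is $\QQ$-simple and isotropic it shows $\langle\Sa_\alpha^a:\alpha\rangle=\Sa^a$, then uses that $\Sa^a(\ZZ)$ is a finitely generated abelian group to write each $\gamma$ in a finite-index subgroup as $\gamma=\prod_\alpha\gamma_\alpha$ with $\gamma_\alpha\in\Sa_\alpha^a(\ZZ)$ and $\ell(\gamma_\alpha)\le C\ell(\gamma)$, and finally observes that word length in a torus lattice is Lipschitz-equivalent to $\log\|\cdot\|$, while each $\langle\Ua_\alpha,\Ua_{-\alpha}\rangle$ lies in some $\La\in\subgroups$. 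Your closing sentence, that ``the chosen factorization of torus elements is similarly polynomial,'' is asserting precisely the estimate that must be proved; it does not follow from factor projections being $\QQ$-morphisms, because the decomposition into the $\Sa_\alpha^a(\ZZ)$ is a noncanonical choice in a finitely generated abelian group, not the restriction of an algebraic projection.
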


\begin{proof}
Let $\Sa$ be a maximal $\QQ$-torus of~$\Ma$ that contains a maximal $\QQ$-split torus~$\Ta$ of~$\Ma$, and let $\subgroups$ be the set of all subgroups that are an isotropic almost $\QQ$-simple factor of the Levi factor of a maximal parabolic $\QQ$-subgroup of~$\Ga$ that contains~$\Sa$. (The Levi factor is also required to contain~$\Sa$.) Note that $\subgroups$ is finite, since each element of~$\subgroups$ is generated by root subgroups of the maximal torus~$\Sa$.

By assumption, $\Ma$ is a Levi factor of some parabolic $\QQ$-subgroup~$\Pa$ of~$\Ga$. By enlarging~$\Ma$, we may assume $\Pa$ is maximal, so $\Pa$ corresponds to some circled vertex in the Tits index~$\mathcal{D}$ of~$\Ga$. (More precisely, it corresponds to the orbit of this circled vertex under the $*$-action of the Galois group.)
This circled vertex represents a simple root~$\beta$ of~$\Sa$ that is nontrivial on the maximal $\QQ$-split torus~$\Ta$. Since $\rank_\QQ \Ga > 1$, there is some other circled simple root~$\phi$ (that is not in the same orbit of the $*$-action of the Galois group).

We can write $\Ma$ as an almost-direct product $\Ma = \Sa_0 \Ma_1 \Ma_2 \cdots \Ma_k$, where $\Sa_0$~is the central torus, and each $\Ma_i$ is almost $\QQ$-simple.
Then the subgroup $\Sa_0(\ZZ) \Ma_1(\ZZ) \Ma_2(\ZZ) \cdots \Ma_k(\ZZ)$ has finite index in $\Ma(\ZZ)$, so there is no harm in assuming that $\gamma$ is in either $\Sa_0$ or some~$\Ma_i$.

Consider first the case where $\gamma \in \Ma_i$.
Let $\quot{\mathcal{D}}$ be the quotient of~$\mathcal{D}$ in which two vertices are identified if they are in the same orbit of the $*$-action. Then the simple factor $\Ma_i$ is generated by the roots corresponding to the vertices in some component~$C$ of $\quot{\mathcal{D}} \smallsetminus \quot{\beta}$ (cf.\ \cite[????~2.5.4, p.~41]{Tits-Classif}).
And we may assume $\Ma_i$ is anisotropic. (Otherwise, we have $\Ma_i \in \subgroups$, so the desired conclusion is obvious.) This means that $C$ has no circled vertices, so, in particular, the circled vertex $\quot{\phi}$ cannot be in~$C$. Therefore $C \cup \{\quot{\beta}\}$ is contained in a single connected component~$C_1$ of $\quot{\mathcal{D}} \smallsetminus \{\quot{\phi}\}$. Note that $C_1$ corresponds to an almost-simple factor of the Levi factor of the maximal parabolic that corresponds to (the $*$-orbit of) the circled root~$\phi$. This simple factor contains~$\Ma_i$ (because $C_1$ contains~$C$). And this simple factor is isotropic (because $C_1$ contains a circled vertex, namely~$\quot{\beta}$), so it is in~$\subgroups$.

We may now assume $\gamma \in \Sa_0$, so $\gamma$ is in the maximal torus~$\Sa$.
For each root~$\alpha$ in a base~$\Sigma$ of the root system~$\Phi(\Ta, \Ga)$, let
	\[\Sa_\alpha = \bigl( \langle \Ua_\alpha, \Ua_{-\alpha} \rangle \cap \Sa \bigr)^\circ , \]
so $\Sa_\alpha$ is a subtorus of~$\Sa$ that is defined over~$\QQ$. Since $\Ga$ has no anisotropic factors (indeed, $\Ga$ is $\QQ$-simple and isotropic), it is not difficult to see that
	$\langle \Sa_\alpha \,|\, \alpha \in \Sigma \rangle = \Sa$,
so
	$\langle \Sa_\alpha^a \,|\, \alpha \in \Sigma \rangle = \Sa^a$,
where $\Ra^a$ denotes the anisotropic part of a $\QQ$-torus~$\Ra$.
This implies that
	$\langle \Sa_\alpha^a(\ZZ) \,|\, \alpha \in \Sigma \rangle$ is a~finite-index subgroup of~$\Sa^a(\ZZ)$.
Since $\Sa^a(\ZZ)$ is a~finitely generated abelian group, we conclude that each $\gamma$ in a certain finite-index subgroup of $\Sa^a(\ZZ)$ can be written in the form
	$\gamma = \prod\limits_{\alpha \in \Sigma} \gamma_\alpha$,
with $\gamma_\alpha \in \Sa_\alpha^a(\ZZ)$ and $\ell(\gamma_\alpha) < C \ell(\gamma)$, where $\ell(\gamma)$ denotes the word-length of~$\gamma$ in $\Sa^a(\ZZ)$ and $C$ is a~constant. Since $\Sa$ is a torus, it is clear that $\ell(\gamma)$ is coarsely Lipschitz equivalent to $\log \|\gamma\|$. And we know (for example, from the above argument for the case where $\gamma \in \Ma_i$) that $\langle \Ua_\alpha, \Ua_{-\alpha} \rangle$ is contained in some element of~$\subgroups$. This completes the proof.
\end{proof}

\begin{defn}[cf.\ {\cite[p.~44]{MR1828742}}] \label{omegaDefn}
For each simple root~$\alpha$ of~$\Phi^+$, let $\lambda_\alpha$ be a nonzero, dominant, integral $\QQ$-weight of~$\Ga$ that is orthogonal to all of the simple roots of~$\Phi^+$, other than~$\alpha$. (This determines~$\lambda_\alpha$ up to a positive multiple: the condition determines $\lambda_\alpha$ on~$\Ta$, up to a positive multiple, and $\lambda_\alpha$ must be trivial on the anisotropic part of a chosen maximal $\QQ$-torus that contains~$\Ta$, because $\lambda_\alpha$ is defined over~$\QQ$.) After replacing $\lambda_\alpha$ by a positive integral multiple, we may assume there is a finite-dimensional irreducible $\QQ$-representation~$V_\alpha$ of~$\Ga$ whose lowest weight is~$-\lambda_\alpha$, and such that the lowest weight space $V_\alpha^{-\lambda_\alpha}$ is $1$-dimensional. (Indeed, $\lambda_\alpha$~is a fixed point of the $*$-action of the Galois group, so \cite[Theorem~3.3]{MR0277536} implies that it suffices to have $\lambda_\alpha$ be in the root lattice.)
Let $V_\alpha^*$ be the dual of~$V_\alpha$. Choose nonzero $\QQ$-vectors $v_\alpha$ in the lowest weight space of~$V_\alpha$, and~$v_\alpha^*$ in the highest weight space of~$V_\alpha^*$, and define a regular function~$\omega_\alpha$ on~$\Ga$ by
	\[ \omega_\alpha(g) = v_\alpha^*( g v_\alpha) .\]
Since $V_\alpha$ is a $\QQ$-representation, and $\Gamma = \Ga(\ZZ)$, we may assume, by passing to an integral multiple of~$v_\alpha$, that
	\begin{gather} \label{omega(Gamma)}
	 \omega_\alpha(\Gamma) \subseteq \ZZ
	. \end{gather}
\end{defn}

The following result must be well known, but we do not know of a reference for the statement in this generality.

\begin{lem}[cf.\ {\cite[Lemma~4.11(i)]{MR1828742}}] \label{OmegaIdeal}
We have $\Omega = \{ g \in \Ga \,|\, \text{$\omega_\alpha(g) \neq 0$ for all~$\alpha$} \} $.
\end{lem}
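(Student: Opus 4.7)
The approach is to package all the $\omega_\alpha$ into a single regular function. Let $\Sigma$ denote the set of simple roots of~$\Phi^+$, set $\lambda := \sum_{\alpha \in \Sigma} \lambda_\alpha$, and form the representation $V := \bigotimes_{\alpha \in \Sigma} V_\alpha$ with distinguished vectors $v := \bigotimes_\alpha v_\alpha \in V$ and $v^* := \bigotimes_\alpha v_\alpha^* \in V^*$. Then $v$ spans the one-dimensional lowest $\Ta$-weight space $V^{-\lambda}$, the functional $v^*$ vanishes on every other $\Ta$-weight space, and
\[
 \omega(g) := v^*(g v) = \prod_{\alpha \in \Sigma} \omega_\alpha(g).
\]
Thus $\omega(g) \neq 0$ iff every $\omega_\alpha(g) \neq 0$, and it suffices to prove $\Omega = \{g \in \Ga \mid \omega(g) \neq 0\}$.

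The inclusion $\Omega \subseteq \{\omega \neq 0\}$ follows by a direct computation. Write $g = u p$ with $u \in \Ua^+$ and $p \in \Pa^-$. Since $v$ is a lowest weight vector, $\Pa^-$ stabilizes the line~$\CC v$, so $pv = \chi(p)\,v$ for some character $\chi$ of~$\Pa^-$. Meanwhile $u v = v + v'$ with $v'$ lying in the sum of the strictly larger $\Ta$-weight spaces, because positive root vectors raise weights and $-\lambda$ is the minimum. Since $v^*$ annihilates every weight space other than $V^{-\lambda}$, we conclude $\omega(g) = \chi(p)\,v^*(v) \neq 0$.

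For the reverse inclusion I would use the Bruhat decomposition of the orbit $\Ga \cdot [v] \subseteq \mathbb{P}(V)$. The weight $\lambda$ is \emph{regular}: by orthogonality of~$\lambda_\alpha$ to each simple root $\beta \neq \alpha$ and positivity of~$\langle \lambda_\beta, \beta^\vee \rangle$, one obtains $\langle \lambda, \beta^\vee \rangle = \langle \lambda_\beta, \beta^\vee \rangle > 0$ for every simple $\beta \in \Sigma$. Regularity forces the $\Ga$-stabilizer of $\CC v$ to equal~$\Pa^-$ (it is a priori a standard $\QQ$-parabolic containing~$\Pa^-$, and regularity rules out every proper enlargement). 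Hence $\Ga \cdot [v] \cong \Ga/\Pa^-$, and Bruhat expresses this orbit as $\bigsqcup_{w \in W} \Ua^+ w \cdot [v]$ where $W$ is the relative Weyl group of~$\Ta$; the unique open cell $\Ua^+ \cdot [v]$ has preimage $\Ua^+\Pa^- = \Omega$ in~$\Ga$. If $g \notin \Omega$, then $gv = c \cdot u w v$ for some $c \in \CC^\times$, $w \neq 1$ in~$W$, and $u \in \Ua^+$. Since $wv$ has $\Ta$-weight $w(-\lambda) \neq -\lambda$ (again by regularity), and since $u$ only raises weights, $v^*(uwv) = 0$ and therefore $\omega(g) = 0$.

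The main delicate point is the stabilizer identification when $\Ga$ is not $\QQ$-split: standard $\QQ$-parabolics of~$\Ga$ containing~$\Pa^-$ correspond bijectively to subsets of~$\Sigma$, so the argument reduces to checking that regularity of~$\lambda$ as a $\Ta$-character forbids any proper enlargement of~$\Pa^-$ from fixing the $\QQ$-line $\CC v$. With that in hand, the relative Bruhat decomposition over~$\QQ$ completes the proof.
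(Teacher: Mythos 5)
Your proof is correct and, at its core, uses the same two ingredients as the paper's: the Bruhat decomposition, and the fact that the weights $\lambda_\alpha$ collectively ``see'' all of $\Ta$. What differs is the packaging. You bundle the $\omega_\alpha$ into a single regular function $\omega = \prod_\alpha \omega_\alpha = v^*(\,\cdot\,v)$ on the tensor product $V = \bigotimes_\alpha V_\alpha$, observe that $\lambda = \sum_\alpha \lambda_\alpha$ is a \emph{regular} dominant $\QQ$-weight, and read the lemma off the Schubert stratification of the orbit $\Ga \cdot [v] \cong \Ga/\Pa^-$ inside $\mathbb{P}(V)$: any nontrivial Weyl translate $w(-\lambda)$ of the lowest weight lies strictly above $-\lambda$, so the cell $\Ua^+ w \cdot [v]$ with $w \neq 1$ is contained in the zero locus of~$v^*$. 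The paper instead keeps the $\omega_\alpha$ separate, applies the longest Weyl element $w_0$ to pass from lowest-weight to highest-weight projections, writes $w_0 g = u^- w' p^-$ by Bruhat, and pins $w'$ down to $w_0$ from the nonvanishing of each $\pi^+_\alpha(w' p^- v_\alpha)$; that last step uses that $\{\lambda_\alpha\}$ spans $\Ta^*\otimes\RR$, which is exactly the regularity of your $\lambda$ in a different guise. Your version is a modest but genuine streamlining: it replaces the family of conditions and the pair of projections $\pi_\alpha^\pm$ with a single weight-regularity argument, and it makes the geometric statement (the stabilizer of $[v]$ is $\Pa^-$, and the big cell is the non-vanishing locus of $\omega$) more transparent.
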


\begin{proof}
For each simple root~$\alpha$ of~$\Phi^+$, let $\pi^-_\alpha$ and~$\pi^+_\alpha$ be the $\Ta$-invariant projections of~$V_\alpha$ onto its lowest weight space and its highest weight space, respectively. (So the kernel of~$\pi^\pm_\alpha$ is the sum of the other weight spaces.)

Let $g \in \Omega$, and let $\alpha$ be any simple root of~$\Phi^+$. Since the lowest weight space of~$V_\alpha$ is $\Pa^-$-invariant and $1$-dimensional, we have $p^-(g) v_\alpha = m v_\alpha$, for some nonzero scalar~$m$. Also, since $u^+(g) \in \Ua^+$, we know that $\pi^-_\alpha \bigl( u^+(g) v_\alpha \bigr) = v_\alpha$. Putting these together, we see that
	\[ \pi^-_\alpha(gv_\alpha)
	= \pi^-_\alpha \bigl( u^+(g) p^-(g) v_\alpha \bigr)
	= \pi^-_\alpha \bigl( m v_\alpha \bigr)
	= m v_\alpha, \]
so $\omega_\alpha(g) = v_\alpha^*( g v_\alpha) = m v_\alpha^*( v_\alpha) \neq 0$.

Conversely, suppose $\omega_\alpha(g) \neq 0$, for all~$\alpha$.
Then $\pi^-_\alpha(g v_\alpha) \neq 0$.
Let $w$ be the longest element of the $\QQ$-Weyl group of~$\Ga$. (This means that $w$ sends every positive $\QQ$-root to a negative $\QQ$-root, and vice-versa.)
Then $w$ interchanges the lowest weight with the highest weight, so $\pi^+_\alpha(w g v_\alpha) \neq 0$.
By the Bruhat decomposition, we may write $w g = u^- w' p^-$, for some $u^- \in \Ua^-$, $p^- \in \Pa^-$, and $w'$ in the $\QQ$-Weyl group.
Then, since $\pi^+_\alpha(\Ua^- v) = \pi^+_\alpha(v)$, for all~$v$, we have
\[ \pi^+_\alpha(w' p^- v_\alpha)= \pi^+_\alpha(u^- w' p^- v_\alpha)= \pi^+_\alpha(w g v_\alpha)	\neq 0.
\]
However, we also know that $p^- v_\alpha$ is in the lowest weight space. Therefore, $w'$ must map the lowest weight to the highest weight. Since $-\lambda_\alpha$ is the lowest weight and $w(-\lambda_\alpha)$ is the highest weight, this means $w'(-\lambda_\alpha) = w(-\lambda_\alpha)$. Since this is true for all~$\alpha$ (and $\{-\lambda_\alpha\}$ spans the dual of~$\Ta$), we conclude that $w' = w$. So
	\[ g
	= w^{-1} u^- w' p^-
	= w^{-1} u^- w p^-
	\in w^{-1} \Ua^- w \Pa^-
	= \Ua^+ \Pa^-
	= \Omega
	, \]
as desired.
\end{proof}

The following easy consequence must also be well known.
It relies on our assumption in Notation~\ref{GaNotation} that $\Ga$ is simply connected.

\begin{cor}[cf.\ {\cite[Lemma~4.11(ii)]{MR1828742}}] \label{F[Omega]}
We have
	\[\QQ[\Omega] = \QQ[\Ga] \left[ \frac{1}{\prod_\alpha \omega_\alpha} \right] .\]
\end{cor}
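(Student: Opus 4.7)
The plan is to deduce this corollary directly from Lemma \ref{OmegaIdeal} by invoking a standard fact from affine algebraic geometry: for any affine variety $X$ over~$\QQ$ and any regular function $f \in \QQ[X]$, the coordinate ring of the principal open subset
\[ D(f) = \{ x \in X \,|\, f(x) \neq 0 \} \]
is canonically identified with the localization $\QQ[X][1/f]$. This is just the description of the structure sheaf of an affine scheme on distinguished open sets.

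I would apply this with $X = \Ga$, which is an affine variety since $\Ga$ is a linear algebraic $\QQ$-group, and $f = \prod_\alpha \omega_\alpha \in \QQ[\Ga]$. Lemma \ref{OmegaIdeal} identifies $\Omega$ with exactly the principal open set $D(\prod_\alpha \omega_\alpha)$, so the cited localization fact immediately yields $\QQ[\Omega] = \QQ[\Ga] \bigl[ 1/\prod_\alpha \omega_\alpha \bigr]$.

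The simply connected hypothesis on $\Ga$ (flagged in the statement as the reason the corollary requires Notation~\ref{GaNotation}) enters not at this final step but already in Definition \ref{omegaDefn}: simple connectedness guarantees that each dominant weight $\lambda_\alpha$ (after passing, if necessary, to a positive integral multiple) actually occurs as the lowest weight of a $\QQ$-representation of~$\Ga$ itself, so that $\omega_\alpha$ is a genuine element of $\QQ[\Ga]$ rather than a regular function on a finite cover. Since all of the nontrivial work has been done in Lemma \ref{OmegaIdeal}, there is no real obstacle: the argument is essentially a one-line reduction to the standard correspondence between distinguished opens and localization.
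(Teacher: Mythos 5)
Your proof is correct, and it takes a genuinely simpler route than the paper's. The paper deduces the corollary from Lemma~\ref{OmegaIdeal} by first invoking the fact that $\QQ[\Ga]$ is a unique factorization domain when $\Ga$ is simply connected; the UFD property is what guarantees that a codimension-one complement such as $\Ga \smallsetminus \Omega$ is cut out by a single equation. You bypass this entirely by observing that Lemma~\ref{OmegaIdeal} \emph{already} exhibits $\Omega$ as the distinguished open $D\bigl(\prod_\alpha\omega_\alpha\bigr)$ (two open subschemes of the reduced scheme~$\Ga$ with the same underlying points coincide), after which $\QQ\bigl[D(f)\bigr] = \QQ[\Ga][1/f]$ is the standard affine localization fact. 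Your argument is shorter and needs strictly less input.

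There is one small inaccuracy in your closing commentary. You locate the role of simple connectedness in Definition~\ref{omegaDefn}, as what guarantees that $\omega_\alpha$ lies in $\QQ[\Ga]$ rather than in the coordinate ring of a cover. But that definition already permits replacing $\lambda_\alpha$ by a positive integral multiple, which produces a genuine regular function on any isogeny form of~$\Ga$, so simple connectedness is not needed there. The place where the paper's proof actually uses simple connectedness is precisely the UFD step that your argument avoids. Read carefully, then, your proof shows that Corollary~\ref{F[Omega]} does not in fact depend on the simply-connected hypothesis; the paper's remark that it ``relies on'' that assumption refers to the particular proof the paper gives, not to the statement itself.
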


\begin{proof}
Since $\Ga$ is semisimple and simply connected, it is well known that $\QQ[\Ga]$ is a Unique Factorization Domain (cf.\ \cite[Proposition~3.4]{Rosengarten-Picard}, or see \cite[Corollary~on p.~303]{Popov-Picard} for an explicit statement in the case where $\QQ$ is replaced with an algebraically closed field). Therefore, the desired conclusion is an immediate consequence of~Lemma~\ref{OmegaIdeal}.
\end{proof}

\begin{cor}[cf.\ proof of {\cite[Lemma~4.11(iii)]{MR1828742}}] \label{ubdd}
For $\gamma \in \Gamma \cap \Omega$, we have $\| p^-(\gamma) \| \prec \| \gamma \|$
and $\|u^+_i(\gamma) \| \prec \| \gamma \|$ $($for all~$i)$.
\end{cor}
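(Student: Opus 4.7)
The plan is to exploit the description of the coordinate ring $\QQ[\Omega]$ given by Corollary~\ref{F[Omega]}, together with the integrality $\omega_\alpha(\Gamma) \subseteq \ZZ$ from \eqref{omega(Gamma)}, to turn the \emph{rational} functions $p^-$ and $u^+$ on~$\Omega$ into \emph{polynomial} bounds on $\Gamma \cap \Omega$.

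First, I would observe that because the product map $\Ua^+ \times \Pa^- \to \Omega$ is an isomorphism of varieties, each matrix coordinate of $p^-(g)$ and of $u^+(g)$ is a regular function on~$\Omega$, hence lies in $\QQ[\Omega]$. By Corollary~\ref{F[Omega]}, every such coordinate can be written in the form
\[
\frac{P(g)}{\bigl(\prod_\alpha \omega_\alpha(g)\bigr)^N}
\]
for some polynomial $P \in \QQ[\Ga]$ and some integer $N \ge 0$ (depending on the coordinate). After clearing denominators in the finitely many coordinate functions, we may take $P$ and $N$ uniformly bounded.

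Next, I would evaluate at $\gamma \in \Gamma \cap \Omega$. Lemma~\ref{OmegaIdeal} gives $\omega_\alpha(\gamma) \neq 0$, and \eqref{omega(Gamma)} gives $\omega_\alpha(\gamma) \in \ZZ$; hence $|\omega_\alpha(\gamma)| \ge 1$, so the denominator satisfies $\bigl|\prod_\alpha \omega_\alpha(\gamma)\bigr|^N \ge 1$. On the other hand, $P(\gamma)$ is a polynomial in the matrix entries of~$\gamma$, so $|P(\gamma)| \prec \|\gamma\|$. Combining these, each coordinate of $p^-(\gamma)$ and of $u^+(\gamma)$ is bounded polynomially in $\|\gamma\|$, giving $\|p^-(\gamma)\| \prec \|\gamma\|$ and $\|u^+(\gamma)\| \prec \|\gamma\|$.

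Finally, for the factors $u^+_i(\gamma)$, I would use that the product map $\ra{\Ua_i^+} \times \Ua_i^+ \times \la{\Ua_i^+} \to \Ua^+$ is an isomorphism of varieties (noted in Notation~\ref{QIBddGenNotation}(9)); its inverse is a polynomial map on the unipotent group~$\Ua^+$, so $\|u^+_i(\gamma)\| \prec \|u^+(\gamma)\| \prec \|\gamma\|$. No real obstacle appears here; the main (and only) point is the interplay between $\QQ[\Omega] = \QQ[\Ga][1/\prod \omega_\alpha]$ and the integrality $|\omega_\alpha(\gamma)| \ge 1$, which converts rational functions into polynomially bounded ones precisely on the integral points of~$\Omega$.
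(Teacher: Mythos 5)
Your proposal is correct and takes essentially the same approach as the paper's proof, which is a terse one-sentence argument invoking the regularity of $g \mapsto p^-(g)$ and $g \mapsto u^+_i(g)$ on~$\Omega$, the integrality $\omega_\alpha(\Gamma) \subseteq \ZZ$, and Corollary~\ref{F[Omega]} together with Lemma~\ref{OmegaIdeal}. You have simply spelled out the details (writing each coordinate as $P / \bigl(\prod_\alpha \omega_\alpha\bigr)^N$, using $|\omega_\alpha(\gamma)| \ge 1$ to kill the denominator, and passing from $u^+$ to $u^+_i$ via the variety isomorphism of $\Ua^+$) that the paper leaves implicit.
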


\begin{proof}Since $g \mapsto p^-(g)$ and $g \mapsto u^+_i(g)$ are regular functions on~$\Omega$, and $\omega_\alpha(\Gamma) \subseteq \ZZ$ (for all~$\alpha$), the desired bounds are immediate from Corollary~\ref{F[Omega]} (and Lemma~\ref{OmegaIdeal}).
\end{proof}

\section{Proof of the main result} \label{MainProofSect}

We will see that Theorem~\ref{QIBddGenByRank1} is an easy consequence of the following lemma, which is a slight modification of a result of Lubotzky--Mozes--Raghunathan (and is proved by essentially the same argument).

\begin{lem}[cf.\ {\cite[paragraphs~4.19, 4.21, and 4.22]{MR1828742}}] \label{OneStep}
Let $i \in \{1,2,\ldots,k\}$, and let $F_i$ be a~finite subset of~$\Ua^+(\QQ)$.
There is a finite subset $F_{i+1}$ of~$\Ua^+(\QQ)$, such that if $\gamma \in \Omega(\ZZ)$, and $\la{u^+_i}(\gamma) \in F_i$, then there exists $x_i \in {}^{\la{u^+_i}(\gamma)}\Ga_i \cap \Ga(\ZZ)$, such that
\begin{enumerate}\itemsep=0pt
\item[$1)$] %\label{OneStep-u}
	$\la{u^+_{i+1}}(x_i \gamma) \in F_{i+1}$, and
\item[$2)$] %\label{OneStep-norm}
	$\| x_i \| \prec \| \gamma \|$.
\end{enumerate}
\end{lem}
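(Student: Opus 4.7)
Set $v := \la{u^+_i}(\gamma) \in F_i$, $u := u^+_i(\gamma)$, and $w := \ra{u^+_i}(\gamma)$, so that $\gamma = v u w\, p^-(\gamma)$; by Corollary~\ref{ubdd}, $\|u\|, \|w\| \prec \|\gamma\|$. The plan is to seek $x_i$ of the form $x_i = v z v^{-1}$ with $z \in \Ga_i$: because $v$ lies in the finite set~$F_i$, the condition $x_i \in \Ga(\ZZ)$ becomes $z \in v^{-1}\Ga(\ZZ)v \cap \Ga_i$, which is an arithmetic subgroup of the $\QQ$-rank-one group $\Ga_i$, commensurable with $\Ga_i(\ZZ)$. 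With this substitution,
\[
x_i\gamma \;=\; v\,(zu)\, w\, p^-(\gamma).
\]

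The core step is rank-one reduction theory: choose $z$ so that when $zu$ is written in the big Bruhat cell $\Ga_i = \Ua^+_i \cdot \Pa^-_i$, say $zu = u'\,p'$ with $u' \in \Ua^+_i(\QQ)$ and $p' \in \Pa^-_i(\QQ)$, the factor $u'$ lies in a prescribed finite set $F'_i \subseteq \Ua^+_i(\QQ)$. Such a $z$ exists because the arithmetic subgroup $v^{-1}\Ga(\ZZ)v \cap \Ga_i$ has only finitely many orbits on the flag variety $\Ga_i(\QQ)/\Pa^-_i(\QQ)$ (finiteness of cusps in $\QQ$-rank one), and the polynomial bound $\|z\| \prec \|u\| \prec \|\gamma\|$ follows from Corollary~\ref{F[Omega]}, which controls both the numerators and denominators of the rational coordinates of $u$ by a polynomial in~$\|\gamma\|$.

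To finish, substitute $zu = u'p'$ to obtain $x_i\gamma = (vu')(p'w)\, p^-(\gamma)$. A root-space check yields the structural identity
\[
\Pa^-_i \cdot \ra{\Ua^+_i} \;\subseteq\; \ra{\Ua^+_i} \cdot \Pa^-
\]
(any root of $\Pa^-_i$ lies on the line $\RR\,\eta(\Phi_i)$, so adding it to a root strictly to the right of this line keeps the root strictly to the right), hence $p'w = \widetilde w\,\widetilde p$ with $\widetilde w \in \ra{\Ua^+_i}(\QQ)$ and $\widetilde p \in \Pa^-(\QQ)$, both of size $\prec \|\gamma\|$. Then $x_i\gamma = (vu'\widetilde w) \cdot (\widetilde p\, p^-(\gamma))$ is the decomposition along $\Omega = \Ua^+\cdot\Pa^-$. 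Since $\la{\Phi^+_{i+1}} = \la{\Phi^+_i} \sqcup \Phi^+_i$ gives $vu' \in \la{\Ua^+_{i+1}}$, and $\ra{\Phi^+_i} = \Phi^+_{i+1} \sqcup \ra{\Phi^+_{i+1}}$ gives $\widetilde w \in \Ua^+_{i+1}\cdot\ra{\Ua^+_{i+1}}$, reading off the left-of-$\Phi_{i+1}$ component yields $\la{u^+_{i+1}}(x_i\gamma) = v u' \in F_i \cdot F'_i =: F_{i+1}$, a finite set independent of~$\gamma$. Finally, $\|x_i\| = \|v z v^{-1}\| \prec \|z\| \prec \|\gamma\|$ because $v$ ranges over the fixed finite set~$F_i$.

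\textbf{Main obstacle.} The delicate step is the quantitative rank-one reduction, i.e., producing $z$ with both the Bruhat-component control on $u'$ and the polynomial size bound $\|z\| \prec \|\gamma\|$. This requires converting the finiteness-of-cusps statement for $\Ga_i$ into an effective bound: the coordinates of $u^+_i(\gamma)$ are rationals whose denominators can grow, and controlling these denominators polynomially in $\|\gamma\|$ relies essentially on the $\omega_\alpha$-machinery of Definition~\ref{omegaDefn} together with the UFD-based Corollary~\ref{F[Omega]}. This is exactly the content of paragraphs~4.19--4.22 of Lubotzky--Mozes--Raghunathan~\cite{MR1828742}, which the present lemma merely repackages in a form convenient for iteration.
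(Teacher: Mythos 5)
Your overall strategy mirrors the paper's: set $v = \la{u^+_i}(\gamma)$, seek $x_i = v z v^{-1}$ with $z$ in an arithmetic subgroup of $\Ga_i$, use rank-one reduction theory to write $z\,u^+_i(\gamma) = u' p'$ with $u'$ in a finite subset of $\Ua^+_i(\QQ)$, and then read off the $\la{\Ua^+_{i+1}}$-component of $x_i\gamma$ using $\Pa_i^- \ra{\Ua^+_i} \subseteq \ra{\Ua^+_i}\Pa^-$ and $\la{\Phi^+_{i+1}} = \la{\Phi^+_i} \sqcup \Phi^+_i$. This is exactly the paper's derivation of conclusion~(1) (your $z, u', p', \widetilde w, \widetilde p$ are the paper's $x^{-1}, f, q$, and the $\ra{\Ua^+_i}$ and $\Pa^-$ factors in equation~\eqref{usgamma}). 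That part is correct.

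The gap is in conclusion~(2), which is the bulk of the paper's proof. The assertion ``$\|z\| \prec \|u\| \prec \|\gamma\|$ follows from Corollary~\ref{F[Omega]}'' is not correct as written: $\|u\|$ is an archimedean norm, and an element $u \in \Ua^+_i(\QQ)$ can be archimedeanly close to the identity while having huge denominators, which forces any $z$ in the arithmetic group with $zu \in F^0\,\Pa_i^-(\QQ)$ to be large. (In $\SL_2$: if $u = \left(\begin{smallmatrix} 1 & a/b \\ 0 & 1 \end{smallmatrix}\right)$ with $|a|$ small and $b$ large, the reducing $z \in \SL(2,\ZZ)$ has $\|z\| \asymp \max(|a|,|b|)$, not $\asymp\|u\|$.) You partially correct this in your ``main obstacle'' paragraph by invoking denominator control via $\prod_\alpha \omega_\alpha(\gamma)$, which is indeed available; but converting numerator-and-denominator control into an effective bound $\|z\| \prec \|\gamma\|$ still requires an explicit quantitative reduction-theory argument, and you do not supply one — you cite Lubotzky--Mozes--Raghunathan, which is circular since this lemma \emph{is} their argument. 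Notably, the paper does not take the ``bound num/denom of $u$'' route at all. Instead it (a)~normalizes $\|u^+_i(\gamma)\| = O(1)$ by left-multiplying $\gamma$ by an element of the cocompact lattice $\widehat{U}_\ZZ = {}^v\Ua^+_i \cap \Ga(\ZZ)$, controlled by Corollary~\ref{ubdd}; (b)~writes the Langlands decomposition $q = ma$ with $m \in (\Ma_i\Na_i)(\QQ)$, $a \in \Aa_i(\QQ)$ and normalizes $\|m\| = O(1)$ via a cocompact lattice in $(\Ma_i\Na_i)(\RR)$; and (c)~bounds $\|x\| \prec \|a^{-1}\| \prec \max\bigl(|\omega_\alpha(a)|, |\omega_\alpha(a^{-1})|\bigr)$ for a simple root $\alpha$ not orthogonal to $\Phi_i$, closing the estimate with the integrality $\omega_\alpha(\Gamma)\subseteq\ZZ$ from~\eqref{omega(Gamma)}: $|\omega_\alpha(a^{-1})| = |\omega_\alpha(xf)| \ge 1/O(1)$ because $xf$ has bounded denominators, and $|\omega_\alpha(a)|\cdot|\omega_\alpha(p)| = |\omega_\alpha({}^v x^{-1}\gamma)| \ge 1$ forces $|\omega_\alpha(a^{-1})| \le |\omega_\alpha(p)| \prec \|\gamma\|$. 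Steps (a)--(c) are where the work lives; they are missing from your proposal.
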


\begin{proof}For convenience, let
	\[ u = u^+(\gamma), \qquad \overleftarrow{u} = \la{u^+_i}(\gamma), \qquad \widehat{u} = u^+_i(\gamma), \ \qquad \overrightarrow{u} = \ra{u^+_i}(\gamma), \qquad \text{and}\qquad p = p^-(\gamma) .\]
Since $\overleftarrow{u} \in \Ga(\QQ)$, we may fix a finite-index subgroup~$\Gamma_i$ of $\Ga_i(\ZZ)$, such that ${}^{\overleftarrow{u}}\Gamma_i \subseteq \Ga(\ZZ)$.

We claim there is a finite subset $F^0$ of $\Ua^+_i(\QQ)$, such that $\Ga_i(\QQ) = \Gamma_i F^0 \Pa_i^-(\QQ)$. (This is implicit in the proof of \cite[Lemma~4.19]{MR1828742}, but, for completeness, we record the argument.) It is well known that there is a finite subset~$F^0$ of $\Ga_i(\QQ)$, such that $\Ga_i(\QQ) = \Gamma_i F^0 \Pa_i^-(\QQ)$ \cite[Proposition~15.6]{MR0244260}. There is no harm in multiplying elements of~$F^0$ on the left by elements of~$\Gamma_i$ and on the right by elements of~$\Pa_i^-(\QQ)$; we show that, by doing this, we may assume $F^0 \subseteq \Ua^+_i(\QQ)$.
Note that the set $\Ua^+_i \Pa_i^-$ is Zariski open in~$\Ga_i$ (``big cell'').
Also, for any $f \in F^0$, the Borel Density Theorem \cite[Theorem~4.10, p.~205] {MR1278263} implies that the set $\Gamma_i f$ is Zariski dense in~$\Ga_i$ (because $\Ga_i(\RR)/\Gamma_i$ has finite volume \cite[Theorem~4.13, p.~213]{MR1278263}). These two sets must therefore intersect. Since the map $\Ua^+_i \times \Pa_i^- \to \Ua^+_i \Pa_i^-$ is a biregular $\QQ$-isomorphism \cite[Proposition~3.24, p.~84]{BorelTits-GrpRed}, this implies $(\Gamma_i f) \cap \bigl(\Ua^+_i(\QQ) \Pa_i^-(\QQ) \bigr) \neq \varnothing$. Hence, after multiplying $f$ on the left by an appropriate element of~$\Gamma_i$ and on the right by an appropriate element of $\Pa_i^-(\QQ)$, we may assume $f \in \Ua^+_i$. This completes the proof of the claim.

Now, the claim tells us that we may write
	\[ \widehat{u} = x f q \qquad \text{with} \quad x \in \Gamma_i, \quad f \in F^0 \subset \Ua^+_i(\QQ), \quad \text{and} \quad q \in \Pa_i^-(\QQ) .\]
Note that
	\begin{gather*}
	 {}^{\overleftarrow{u}} \! \! x^{-1} u
	 = {}^{\overleftarrow{u}} \! \! x^{-1} \overleftarrow{u} \widehat{u} \overrightarrow{u}
	= \overleftarrow{u} x^{-1} (x f q) \overrightarrow{u}
	= \overleftarrow{u} f {}^q \overrightarrow{u} q
	. \end{gather*}
It follows from the definitions that $\ra{\Ua_i}$ is normalized by $\Pa_i^-$ (indeed, it normalized by $\Ga_i$), so we have
	\begin{gather} \label{quplus}
	{}^q \overrightarrow{u}
	\in \ra{\Ua_i}
	= \ra{\Ua_i^+} \big(\ra{\Ua_i} \cap \unip \Pa^-\big)
	\subseteq \ra{\Ua_i^+} \Pa^-
	. \end{gather}
We also know $q \in \Pa_i^- \subseteq \Pa^-$. Therefore
	\begin{gather} \label{usgamma}
	{}^{\overleftarrow{u}} \! \! x^{-1} \gamma
	= {}^{\overleftarrow{u}} \! \! x^{-1} (u p)
	= \big({}^{\overleftarrow{u}} \! \! x^{-1} u\big) p
	= \big(\overleftarrow{u} f {}^q \overrightarrow{u} q\big) p
	\in \overleftarrow{u} f \big(\ra{\Ua_i^+} \Pa^-\big) \Pa^- \Pa^-
	= \overleftarrow{u} f \ra{\Ua_i^+} \Pa^-
	. \end{gather}
Since $\overleftarrow{u} f \in \la{\Ua^+_i}(\QQ) \cdot \Ua^+_i(\QQ) = \la{\Ua^+_{i+1}}(\QQ)$, this implies
	\[ \la{u^+_{i+1}} \big( {}^{\overleftarrow{u}} \! \! x^{-1} \gamma\big) = \overleftarrow{u} f .\]
Now, let $x_i = {}^{\overleftarrow{u}} \! \! x^{-1}$ and $F_{i+1} = F_i F^0$, so we have
	\begin{gather*}
	\la{u^+_{i+1}} ( x_i \gamma)
	= \la{u^+_{i+1}} \big( {}^{\overleftarrow{u}} \! \! x^{-1} \gamma\big)
	= \overleftarrow{u} f
	\in F_i F^0
	= F_{i+1}
	. \end{gather*}
This establishes (1).

All that remains is to show we can choose~$x$ so that (2) holds.
To this end, let $\widehat{U}_\ZZ = {}^{\overleftarrow{u}}\Ua^+_i \cap \Ga(\ZZ)$, so $\widehat{U}_\ZZ$ is a cocompact lattice in~${}^{\overleftarrow{u}}\Ua^+_i(\RR)$ \cite[Theorem~4.12, p.~210]{MR1278263}. Then Corollary~\ref{ubdd} implies that we may assume
	$\| u^+_i(\gamma) \| = O(1) $,
after multiplying~$\gamma$ on the left by an element~$\widehat{v}$ of~$\widehat{U}_\ZZ$, such that $\| \widehat{v} \| \prec \| \gamma \|$. This means $\| \widehat u \| = O(1)$.

Let $\Pa_i^- = \Ma_i \Aa_i \Na_i$ be the Langlands decomposition (with $\Aa_i = (\Ta \cap \Ga_i)^\circ$), and write
	\[ q = ma, \qquad \text{with} \quad m \in (\Ma_i \Na_i)(\QQ) \quad \text{and} \quad a \in \Aa_i(\QQ) .\]
Note that $\Ma_i \Na_i$ has no nontrivial $\QQ$-characters, so ${}^{f^{-1}} \Gamma_i \cap (\Ma_i \Na_i)$ is a cocompact lattice in $(\Ma_i \Na_i)(\RR)$ \cite[Theorem~4.12, p.~210]{MR1278263}. Therefore, after multiplying $m$ on the left by an appropriate element of ${}^{f^{-1}} \Gamma_i \cap (\Ma_i \Na_i)$ (and multiplying $x$ on the right by the $f$-conjugate of the inverse of this element), we may assume $\| m \| = O(1)$ (and we still have $\widehat{u} = x f q$). Choose a simple $\QQ$-root~$\alpha$ that is not orthogonal to $\Phi_i$ (so the restriction of~$\lambda_\alpha$ to~$\Aa_i$ has finite kernel). Then
	\begin{gather*}
	\|x\| = \big\| \widehat{u} q^{-1} f^{-1}\big\|
	= \big\| \widehat{u} a^{-1} m^{-1} f^{-1}\big\|\\
\hphantom{\|x\|}{} \le O(1) \cdot \big\|a^{-1}\big\| \cdot O(1) \cdot O(1)
	\prec \|a\|
	\prec \max \bigl( |\omega_\alpha(a)|, \big|\omega_\alpha\big(a^{-1}\big)\big| \bigr)
	. \end{gather*}
Furthermore, we have
	\begin{align*}
	\big|\omega_\alpha\big(a^{-1}\big)\big|
	&=\big|\omega_\alpha\big(q^{-1}\big)\big|
		&& \text{\big($a^{-1} = q^{-1} m \in q^{-1} \Ma_i \Na_i$, and $\Ma_i \Na_i$ fixes~$v_\alpha$\big)}
	\\&=\big|\omega_\alpha\big(\widehat{u} q^{-1}\big)\big|
		&& \text{\big($\widehat{u} \in \Ua^+$ fixes $v_\alpha^*$\big)}
	\\&=|\omega_\alpha(x f)|
		&& \text{\big($\widehat{u} = x f q \ \Rightarrow \ \widehat{u} q^{-1} = x f$\big)}
	\\& > \frac{1}{O(1)}	,
\end{align*}
 because the denominator of $\omega_\alpha(x f)$ is bounded, since $f$ is in a finite subset of $\Ga(\QQ)$ and $x \in \Ga(\ZZ)$ (and $\omega_\alpha(x f)\neq 0$, since $x f = \widehat{u} q^{-1} \in \Ua^+ \Pa^- = \Omega$). So
	\[ \max \bigl( |\omega_\alpha(a)|, \big|\omega_\alpha\big(a^{-1}\big)\big| \bigr)
	= \max \bigl( \big|\omega_\alpha\big(a^{-1}\big)\big|^{-1}, \big|\omega_\alpha\big(a^{-1}\big)\big| \bigr)
	= \max \bigl( O(1), \big|\omega_\alpha\big(a^{-1}\big)\big| \bigr) .\]
However, we also have
	\begin{align*}
	\big| \omega_\alpha \big( {}^{\overleftarrow{u}} \! \! x^{-1} \gamma\big) \big|
	&= \bigl| \omega_\alpha \bigl( \overleftarrow{u} f {}^q \overrightarrow{u} q p \bigr) \bigr|
		&& \text{(delete last two parts of \eqref{usgamma})}
	\\&\in \bigl| \omega_\alpha \bigl( \overleftarrow{u} f \big(\Ua^+ \unip \Pa^-\big) q p \bigr) \bigr|
		&& \text{(see \eqref{quplus})}
	\\&= \bigl| \omega_\alpha \bigl( \Ua^+ (\unip \Pa^-) m a p \bigr) \bigr|
		&& \text{\big($\overleftarrow{u},f \in \Ua^+$ and $q = ma$\big)}
	\\&= \bigl| \omega_\alpha \bigl( a \bigr) \bigr| \cdot \bigl| \omega_\alpha \bigl( p \bigr) \bigr|
		&& \begin{pmatrix} \text{$\Ua^+$ fixes $v_\alpha^*$, and } \\
		\text{$\omega_\alpha|_{\Pa^-}$ is a homomorphism} \end{pmatrix}.
	\end{align*}
Since this is obviously nonzero, and ${}^{\overleftarrow{u}} \! \! x^{-1} \gamma \in \Gamma$, we conclude from \eqref{omega(Gamma)} that $ |\omega_\alpha ( a ) | \cdot |\omega_\alpha ( p ) | \ge 1$. Therefore
	\[ \big|\omega_\alpha\big(a^{-1}\big)\big|
	= \frac{1}{|\omega_\alpha(a)|}
	\le |\omega_\alpha ( p ) |
	\prec \| p\|
	\prec \| \gamma\| .\]
By stringing some of these inequalities together (and noting that $O(1) \prec \| \gamma\|$), we see that
	\[ \| x\|
	 \prec \max \bigl( |\omega_\alpha(a)|, \big|\omega_\alpha\big(a^{-1}\big)\big| \bigr)
	 = \max \bigl( O(1), \big|\omega_\alpha\big(a^{-1}\big)\big| \bigr)
	 \prec \| \gamma\|
	 , \]
which establishes~(2).
\end{proof}

It is now easy to prove the main theorem by using the argument (due to Lubotzky--Mozes--Raghunathan \cite[Section~4]{MR1828742}) that is described in Section~\ref{SLnZSect}:

\begin{proof}[\bf Proof of Theorem~\ref{QIBddGenByRank1}]
Recall that we may assume $\Gamma = \Ga(\ZZ)$ (see Notation~\ref{QIBddGenNotation}(1)).
Thus, given $\gamma \in \Ga(\ZZ)$, we wish to show that $\gamma$ can be written as an appropriate product $\gamma = x_1 x_2 \cdots x_r$. Since $\Ga(\ZZ)$ is Zariski dense in~$\Ga$ \cite[Theorem~4.10, p.~205]{MR1278263} and $\Omega$ is Zariski open in~$\Ga$, we have $\Ga(\ZZ) \Omega = \Ga$. Then the ascending chain condition on Zariski-open subsets implies there is a~finite subset $\Gamma_1$ of $\Ga(\ZZ)$, such that $\Gamma_1 \Omega = \Ga$. This implies $\gamma \in \Gamma_1 \Omega(\ZZ)$, so, by adding the elements of~$\Gamma_1$ to~$\Gamma_0$, we may assume $\gamma \in \Omega(\ZZ)$.

Let $F_1 = \{e\} \subset \Ua^+(\QQ)$. Repeated application of Lemma~\ref{OneStep} yields elements $x_1,x_2,\ldots,x_k$ of~$\Ga(\ZZ)$ and finite subsets $F_2, F_3,\ldots, F_{k+1}$ of $\Ua^+(\QQ)$, such that, for each $i = 1,2,\ldots,k$, we have
\begin{enumerate}\itemsep=0pt
\item[1)] $\la{u^+_{k+1}}(x_i x_{i-1} \cdots x_1 \gamma) \in F_{i+1}$,
\item[2)] $x_i \in {}^f \Ga_i \cap \Ga(\ZZ)$, for some $f \in F_{i}$, and
\item[3)] $\| x_i \| \prec \gamma$ for all~$i$.
	\end{enumerate}
Letting $\gamma_1 = x_k x_{k-1} \cdots x_1 \gamma$, we see (from the case where $i = k$) that $u^+(\gamma_1) = u^+_{k+1}(\gamma_1)$ is in a finite set, so there exists $x$ in a finite subset~$\Gamma_0$ of $\Ga(\ZZ)$, such that $u^+(x\gamma_1)$ is trivial, which means $x \gamma_1 \in \Pa^-$.

Write $\Pa^- = \Na \Ma \Aa$ (Langlands decomposition). Multiplying $x \gamma_1$ on the left by an element of $\Na(\ZZ)$ (and an element of a finite set) yields $\gamma_2 \in \Ma(\ZZ)$. (Note that any element of $\Na(\ZZ)$ is the product of a bounded number of root elements of controlled norm, and each root element is contained in a standard $\QQ$-rank-one subgroup.) Then Lemma~\ref{LeviOK} (and induction on $\rank_\QQ \Ga$) completes the proof (under Assumption~\ref{AbsSimple} that $\Ga$ is absolutely almost simple).
\end{proof}

\begin{rem} \label{ChangeForK}To eliminate the simplifying assumption Assumption~\ref{AbsSimple} that $\Ga$ is absolutely almost simple, one should modify the above argument to adhere a bit more closely to~\cite{MR1828742}, where there is no such assumption.
Begin by noting that $\Ga$ is the restriction of scalars of some absolutely almost-simple group over a number field~$K$ \cite[paragraph~6.21(ii), p.~113]{BorelTits-GrpRed}. Then replace $\QQ$ and~$\ZZ$ with $K$ and~$\ints$ (the ring of integers of~$K$), and replace the norm $\|\gamma\|$ with
	\[ \|\gamma\|_\infty = \max_{v \in S_\infty} \| \gamma\|_v , \qquad \text{where} \quad \text{$S_\infty$ is the set of all archimedean places of~$K$.} \]
Also, as in \cite[p.~35]{MR1828742}, let
	$|t|^* = \prod\limits_{v \in S_\infty} |t|_v$, for $t \in K$.
(When $v$ is a complex place, $|t|_v$ denotes $t \cdot \overline{t}$, not $\sqrt{t \cdot \overline{t}}$.)
Thus, $|t|^* \in \ZZ$ for $t \in \ints$.

In the proof of Corollary~\ref{ubdd}, note that $\omega_\alpha(\gamma) \in \ints$ (and $\omega_\alpha(\gamma) \neq 0$), so $|\omega_\alpha(\gamma)|^* \ge 1$; this implies $\max\limits_{v \in S_\infty} |1/\omega_\alpha(\gamma) |_v \prec \max\limits_{v \in S_\infty} | \omega_\alpha(\gamma) |_v$.

In the proof of Lemma~\ref{OneStep}, instead of cocompact lattices in ${}^{\overleftarrow{u}}\Ua^+_i(\RR)$ and $(\Ma_i \Na_i)(\RR)$, we have cocompact lattices in $\prod\limits_{v \in S_\infty} {}^{\overleftarrow{u}}\Ua^+_i(K_v)$ and $(\Pa_i^-)_{S_\infty}^{(1)}$ (see \cite[Theorem~5.7(2), p.~268]{MR1278263} and cf.\ \cite[Theorem~5.7, p.~264]{MR1278263}). With this in mind, we write a decomposition $q = ma$, where $m \in (\Pa_i^-)_{S_\infty}^{(1)}$ and where, after identifying the $1$-dimensional $K$-split torus~$\Aa_i$ with~$\GG_m$, we have $a \in \GG_m(\QQ) = \QQ^\times$. Note that $\omega_\alpha(a) \in \QQ$ (because every character of~$\GG_m$ is defined over~$\QQ$), so
	$|\omega_\alpha(a)| \asymp |\omega_\alpha(a)|^*$.
Therefore, the above arguments show:
	\begin{gather*}
	\|x\|_\infty \prec \|a\| \prec \max \bigl( |\omega_\alpha(a)|, \big|\omega_\alpha\big(a^{-1}\big)\big| \bigr) , \\
	\big|\omega_\alpha\big(a^{-1}\big)\big| \succ \big|\omega_\alpha\big(a^{-1}\big)\big|^* = |\omega_\alpha(x f)|^* > \frac{1}{O(1)} , \\
	 \max \bigl( |\omega_\alpha(a)|, \big|\omega_\alpha\big(a^{-1}\big)\big|\bigr) = \max \bigl( O(1), \big|\omega_\alpha\big(a^{-1}\big)\big| \bigr) , \\
	 \big|\omega_\alpha\big(a^{-1}\big)\big| \prec \big|\omega_\alpha\big(a^{-1}\big)\big| ^* \prec \| \gamma \|_\infty
	. \end{gather*}
These yield the required bound $\|x\|_\infty \prec \| \gamma \|_\infty$.
\end{rem}

\section[Generalization to $S$-arithmetic groups]{Generalization to $\boldsymbol{S}$-arithmetic groups}\label{SarithSect}

Theorem~\ref{QIBddGenByRank1} is stated only for arithmetic subgroups of~$\Ga$, but it generalizes in a natural way to the $S$-arithmetic setting:

\begin{prop}Let $\Ga$ be an isotropic, almost-simple algebraic group over~$\QQ$, let~$S$ be a~finite set of valuations of~$\QQ$ that includes the archimedean valuation~$\infty$, and let~$\Gamma$ be an $S$-arithmetic subgroup of~$\Ga$. Then $\Gamma$ is quasi-isometrically boundedly generated by standard $\QQ$-rank-$1$ subgroups.
\end{prop}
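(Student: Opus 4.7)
The plan is to mirror the proof of Theorem~\ref{QIBddGenByRank1} with the number-theoretic data replaced by their $S$-adic analogs, in essentially the same way Remark~\ref{ChangeForK} handles the passage from $\QQ$ to a general number field. After reducing to $\Gamma = \Ga(\ZZ_S)$ via an $S$-arithmetic version of Lemma~\ref{BddCommensurable} (whose proof carries over verbatim since it manipulates only cosets), I would redefine
\[ \|\gamma\|_S = \max_{v \in S} \|\gamma\|_v \qquad\text{and}\qquad |t|^*_S = \prod_{v \in S} |t|_v , \]
and re-run Corollary~\ref{ubdd}, Lemma~\ref{OneStep}, and the proof of Theorem~\ref{QIBddGenByRank1} with these replacements throughout.

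The key place where integrality enters is the lower bound $|\omega_\alpha(\gamma)| \ge 1$, which is implicit in Corollary~\ref{ubdd} (through $\omega_\alpha(\Gamma) \subseteq \ZZ$) and explicit in the final chain of inequalities inside the proof of Lemma~\ref{OneStep}. In the $S$-arithmetic setting, $\omega_\alpha(\gamma)$ lies in $\ZZ_S \sm \{0\}$; the product formula, together with $|t|_v \le 1$ for every $t \in \ZZ_S$ and $v \notin S$, yields
\[ |\omega_\alpha(\gamma)|^*_S = \prod_{v \in S} |\omega_\alpha(\gamma)|_v = \prod_{v \notin S} |\omega_\alpha(\gamma)|_v^{-1} \ge 1 , \]
which is precisely the $S$-adic analog of the bound $|\omega_\alpha(\gamma)|^* \ge 1$ used in Remark~\ref{ChangeForK}. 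Borel density for $S$-arithmetic groups still gives $\Ga(\ZZ_S)\,\Omega = \Ga$, so the initial absorption of a finite set into~$\Gamma_0$ proceeds unchanged.

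The remaining ingredients all have standard $S$-arithmetic analogs. In the proof of Lemma~\ref{OneStep}, the cocompactness of ${}^{\la{u^+_i}(\gamma)}\Ua^+_i \cap \Ga(\ZZ)$ in ${}^{\la{u^+_i}(\gamma)}\Ua^+_i(\RR)$ becomes cocompactness in $\prod_{v \in S} {}^{\la{u^+_i}(\gamma)}\Ua^+_i(\QQ_v)$ (because $\Ua^+_i$ has no nontrivial $\QQ$-characters, cf.~\cite[Theorem~5.7(2), p.~268]{MR1278263}), and the analogous statement for $\Ma_i \Na_i$ is replaced by cocompactness in the kernel $(\Pa_i^-)^{(1)}_S$ of the homomorphisms $|\chi|^*_S$ for all $\QQ$-characters~$\chi$; these are precisely the substitutions indicated in Remark~\ref{ChangeForK}. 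Lemma~\ref{LeviOK} is unaffected, since its proof uses only the Tits index and the fact that $\Sa^a(\ZZ_S)$ remains a finitely generated abelian group (Dirichlet's $S$-unit theorem), so the final induction on $\rank_\QQ \Ga$ goes through.

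The main obstacle is bookkeeping: verifying that every norm estimate in Lemma~\ref{OneStep} continues to deliver $\|x\|_S \prec \|\gamma\|_S$. The most delicate step is the chain $\|x\|_S \prec \|a\|_S \prec \max(|\omega_\alpha(a)|^*_S, |\omega_\alpha(a^{-1})|^*_S)$. As in Remark~\ref{ChangeForK}, one exploits that $\omega_\alpha|_{\Aa_i}$ is defined over~$\QQ$, so $\omega_\alpha(a) \in \QQ^\times$ and its various $v$-adic absolute values for $v \in S$ are comparable up to bounded exponents; this is what allows the chain of inequalities in the proof to close, and delivers the required bound.
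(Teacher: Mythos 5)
The overall plan (reduce to $\Gamma = \Ga(\ZZ_S)$, replace $\ZZ$ with $\ZZ_S$ and $\|\cdot\|$ with $\|\cdot\|_S$, use the product formula together with $|t|_v \le 1$ off $S$ to obtain the lower bound $|\omega_\alpha(\gamma)|^*_S \ge 1$, and re-run Corollary~\ref{ubdd} and Lemma~\ref{OneStep}) is the right one, and you correctly identify the $S$-adic replacements of the cocompactness inputs. However, the pivotal claim in your final paragraph --- that ``$\omega_\alpha(a) \in \QQ^\times$ and its various $v$-adic absolute values for $v \in S$ are comparable up to bounded exponents'' --- is false. With $S = \{\infty, p\}$ and $a = p^n$, one has $|a|_\infty = p^n$ while $|a|_p = p^{-n}$; the two absolute values are reciprocals, not comparable. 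This claim is exactly where the nonarchimedean places in $S$ get swept under the rug, so the chain $\|x\|_S \prec \|a\|_S \prec \max\bigl(|\omega_\alpha(a)|^*_S, |\omega_\alpha(a^{-1})|^*_S\bigr)$ does not close as you have written it: if $x$ is only known to lie in $\Ga_i(\ZZ_S)$, then $\|x\|_v$ can be arbitrarily large at nonarchimedean $v \in S$ and you have no handle on it.

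The paper's sketch of Proposition~\ref{GeneralSArith} fills precisely this gap with two devices absent from your proposal. First, $x$ is kept in $\Ga(\ZZ)$, \emph{not} replaced by $\Ga(\ZZ_S)$: the decomposition $\Ga_i(\QQ) = \Gamma_i F^0 \Pa_i^-(\QQ)$ with $\Gamma_i$ of finite index in $\Ga_i(\ZZ)$ is a statement about $\QQ$-points and remains available, and integrality of $x$ bounds $\|x\|_v$ and $\|x^{-1}\|_v$ for every nonarchimedean $v \in S$, so the goal reduces to the archimedean estimate $\|x\|_\infty \prec \|\gamma\|_S$. Second, $a$ is multiplied by a suitable $S$-unit in $\Aa_i(\ZZ_S) = \ZZ_S^\times$ (with $m$ multiplied by its inverse), after which $\|a\|_v$ and $\|a^{-1}\|_v$ are bounded for all nonarchimedean $v \in S$; only after that normalization does one have $|\omega_\alpha(a)|_\infty \asymp |\omega_\alpha(a)|^*_S$, and the archimedean chain of inequalities from the proof of Lemma~\ref{OneStep} goes through. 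Without these two normalizations --- which the paper explicitly flags as the one crucial place where $\ZZ$ is not replaced with $\ZZ_S$ --- the estimate cannot be rescued.
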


In fact, the methods of Lubotzky--Mozes--Raghunathan \cite{MR1828742} also apply in the following more general situation:

\begin{prop} \label{GeneralSArith}Let $\Ga$ be an isotropic, absolutely almost-simple algebraic group over a number field~$K$, let $S$ be a finite set of valuations of~$K$ that includes all of the archimedean valuations, and let $\Gamma$ be an $S$-arithmetic subgroup of~$\Ga$. Then $\Gamma$ is quasi-isometrically boundedly generated by standard $K$-rank-$1$ subgroups.
\end{prop}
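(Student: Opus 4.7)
The plan is to replay the proof of Theorem~\ref{QIBddGenByRank1}, incorporating the $K$-modifications of Remark~\ref{ChangeForK} and then further extending the set $S_\infty$ to the full set $S$ at every step. Concretely, I would replace $\ZZ$ by the ring of $S$-integers $\ints_S$, use the $S$-norm $\|\gamma\|_S = \max_{v \in S}\|\gamma\|_v$, and set $|t|^*_S = \prod_{v \in S}|t|_v$ for $t \in K$. The key number-theoretic input is the product formula $\prod_v |t|_v = 1$, which combined with $|t|_v \le 1$ for $v \notin S$ when $t \in \ints_S$ yields $|t|^*_S \ge 1$ for every nonzero $t \in \ints_S$. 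This inequality replaces the integrality bound $|t|^* \ge 1$ that was used in the archimedean case of Remark~\ref{ChangeForK}.

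First I would verify that each of the preparatory results still goes through. Lemmas~\ref{BddCommensurable} and~\ref{InvtUnderIsogeny} use only abstract commensurability and isogeny of $S$-arithmetic groups, so they carry over verbatim. Lemma~\ref{LeviOK} requires only that $\Sa^a(\ints_S)$ be finitely generated, which follows from Dirichlet's $S$-unit theorem. The algebraic content of Lemma~\ref{OmegaIdeal} and Corollary~\ref{F[Omega]} is a statement about the $K$-group $\Ga$ and does not depend on $S$. For Corollary~\ref{ubdd}, the inequality $|\omega_\alpha(\gamma)|^*_S \ge 1$ gives $\max_{v \in S}|\omega_\alpha(\gamma)^{-1}|_v \prec \max_{v \in S}|\omega_\alpha(\gamma)|_v$, from which the required control of $\|p^-(\gamma)\|_S$ and $\|u^+_i(\gamma)\|_S$ follows exactly as in the original proof.

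The substantial work is the $S$-arithmetic version of Lemma~\ref{OneStep}. The decomposition $\widehat{u} = x f q$ with $x \in \Gamma_i$, $f \in F^0 \subset \Ua^+_i(K)$, and $q \in \Pa_i^-(K)$ uses only finiteness of a double-coset space together with the Borel density theorem; both are available $S$-arithmetically. The two cocompactness normalizations in the original proof must be upgraded: $\widehat{U}_{\ints_S} = {}^{\overleftarrow{u}}\Ua^+_i \cap \Ga(\ints_S)$ is a cocompact lattice in $\prod_{v \in S}{}^{\overleftarrow{u}}\Ua^+_i(K_v)$, and ${}^{f^{-1}}\Gamma_i \cap (\Ma_i \Na_i)$ is a cocompact lattice in $\prod_{v \in S}(\Ma_i \Na_i)(K_v)$, by the standard $S$-arithmetic cocompactness theorems for unipotent groups and for groups with no nontrivial $K$-characters \cite[Chapter~5]{MR1278263}. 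After these reductions the estimates on $\|x\|_S$ proceed exactly as before, with $|\cdot|^*_S$ in place of $|\cdot|^*$; the final bound $|\omega_\alpha(a^{-1})|^*_S \prec \|\gamma\|_S$ comes from applying the product-formula inequality to $\omega_\alpha({}^{\overleftarrow{u}}x^{-1}\gamma) \in \ints_S$.

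Once the $S$-adic Lemma~\ref{OneStep} is in hand, the remaining argument is unchanged: reduce to $\gamma \in \Omega(\ints_S)$ using Zariski density of $\Ga(\ints_S)$ in $\Ga$, iterate the lemma $k$ times to place $\gamma$ inside $\Pa^-(\ints_S)$, strip off $\Na(\ints_S)$ by writing it as a bounded product of root elements of controlled norm, and finish on the Levi factor $\Ma(\ints_S)$ by induction on $\rank_K \Ga$ via Lemma~\ref{LeviOK} (with base case $\rank_K \Ga = 1$, which is trivial). The main obstacle I anticipate is bookkeeping the cocompactness statements uniformly across all $v \in S$, particularly at non-archimedean places where the local groups $\Ga(K_v)$ are totally disconnected; however, no genuinely new technical ingredient is needed, since every required cocompactness and density statement is already packaged in the $S$-arithmetic lattice theory of \cite[Chapter~5]{MR1278263}.
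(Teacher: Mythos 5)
Your general strategy --- replay Theorem~\ref{QIBddGenByRank1} with $\ints_S$, $\|\cdot\|_S$, and $|t|^*_S = \prod_{v\in S}|t|_v$ replacing $\ints$, $\|\cdot\|_\infty$, and $|t|^*$ --- is the one the paper takes, and your handling of Corollary~\ref{ubdd} (via $|t|^*_S \ge 1$ for $0 \ne t \in \ints_S$, which does give a place-by-place bound on $|1/\omega_\alpha(\gamma)|_v$, because each factor of a product that is $\ge 1$ is bounded below by the reciprocal of the remaining factors), of Lemma~\ref{LeviOK}, and of the various cocompactness statements is sound.

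There is, however, a genuine gap at the point where you write that ``the estimates on $\|x\|_S$ proceed exactly as before, with $|\cdot|^*_S$ in place of $|\cdot|^*$.'' The quantity to be bounded is the \emph{supremum} $\|x\|_S = \max_{v\in S}\|x\|_v$, whereas the product-formula argument controls only the \emph{product} $|\omega_\alpha(a^{-1})|^*_S = \prod_{v\in S}|\omega_\alpha(a^{-1})|_v$. A bound on the product says nothing about the supremum: one factor may be enormous while another compensates. (This is genuinely different from your Corollary~\ref{ubdd} argument: there you bound $|1/\omega_\alpha(\gamma)|_{v_0}$ by the remaining factors of a product that is $\ge 1$, and each of those factors is already known to be $\prec \|\gamma\|_S$. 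For $a$, if $x$ is merely in $\Ga(\ints_S)$, then neither $|\omega_\alpha(a)|_v$ nor $|\omega_\alpha(a^{-1})|_v = |\omega_\alpha(xf)|_v$ carries an a~priori per-place bound, so there is nothing to start the argument with.)

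The paper's sketch calls this out explicitly and fixes it with two choices that your uniform substitution discards. First, $x$ is kept in $\Ga(\ints)$, \emph{not} $\Ga(\ints_S)$ --- the paper flags this as ``a crucial instance where $\ints$ is not replaced with $\ints_S$'' --- so that $\|x\|_v$ and $\|x^{-1}\|_v$ are automatically bounded at every nonarchimedean $v\in S$, reducing the task to estimating $\|x\|_\infty$. Second, $a$ is normalized by an element of $\Aa_i(\ints_S) = (\ints_S)^\times$ so that $\|a\|_v$ and $\|a^{-1}\|_v$ are bounded at the nonarchimedean places of $S$, and $a$ is taken in $\GG_m(\QQ) = \QQ^\times$ so that $|\omega_\alpha(a)|_v$ is the same at every archimedean place; only with both of these in hand does the product bound $|\omega_\alpha(a)|^*_S$ translate into the required per-place bound, via $|\omega_\alpha(a)| \asymp |\omega_\alpha(a)|^*_S$. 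Without these two adjustments, the final inequality $\|x\|_S \prec \|\gamma\|_S$ in your Lemma~\ref{OneStep} step is unsupported.
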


The statement of this proposition assumes that Definition~\ref{QIBddGenDefn} has been adapted to this situation by replacing $\QQ$ with~$K$, replacing $\Ga(\ZZ)$ with $\Ga(\ints_S)$, where $\ints_S$ is the ring of $S$-integers of~$K$, and replacing $\|\gamma\|$ with
	$\|\gamma\|_S = \max_{v \in S} \| \gamma \|_v$.

\begin{proof}[Sketch of the proof of Proposition~\ref{GeneralSArith}]
The argument is essentially the same as the modification described in Remark~\ref{ChangeForK}, except that it uses $S$ and the ring~$\ints_S$ of $S$-integers of~$K$, in the place of $S_\infty$ and~$\ints$. In particular, we let $|t|^* = \prod\limits_{v \in S} |t|_v$, so $|t|^* \in \ZZ$ for $t \in \ints_S$.

However, $x$ is taken to be an element of $\Ga(\ints)$; this is a crucial instance where $\ints$ is not replaced with~$\ints_S$. This implies that $\|x\|_v$ and $\big\|x^{-1}\big\|_v$ are bounded, for all nonarchimedean $v \in S$. Hence, although the goal is to show $\|x\|_S \prec \| \gamma \|_S$, it suffices to show $\|x\|_\infty \prec \| \gamma \|_S$.

Note that, after multiplying $a$ by an element of $\Aa_i(\ints_S) = (\ints_S)^\times$ (and multiplying $m$ by the inverse of this element), we may assume that $\|a\|_v$ and $\big\|a^{-1}\big\|_v$ are bounded, for all nonarchimedean $v \in S$. Also, since $a \in \QQ^\times$, we have $|\omega_\alpha(a)|_{v_1} = |\omega_\alpha(a)|_{v_2}$ for all archimedean~$v_1$,~$v_2$. Therefore, we may write $ |\omega_\alpha(a)|$, omitting the subscript. And then we have $|\omega_\alpha(a)| \asymp |\omega_\alpha(a)|^*$.
It is now easy to generalize the argument in the final paragraph of Remark~\ref{ChangeForK}.
\end{proof}

\begin{rem}The results of Lubotzky--Mozes--Raghunathan~\cite{MR1828742} are proved over all global fields, not only those of characteristic zero, but I do not have the expertise to speculate on whether there is a similar generalization of Proposition~\ref{GeneralSArith}.
\end{rem}

\section{Semisimple Lie groups with infinite center} \label{InfiniteCenterSect}

\begin{defn} \label{StandardQInfiniteCenterDefn} Let $\cover\Gamma$ be a noncocompact, irreducible lattice in a connected, semisimple Lie group~$\cover G$ that has no compact factors. (The center of~$\cover G$ may be infinite, but it is a well-known and easy consequence of the Borel density theorem that~$\cover\Gamma$ contains a finite-index subgroup of~$Z\big(\cover G\big)$, so $\Ad_{\cover G} \cover \Gamma$ is a lattice in~$\Ad \cover G$.)
\begin{enumerate}\itemsep=0pt
\item If $\rank_\RR \cover G = 1$, then $\cover G$ is the only \emph{standard $\QQ$-rank-$1$ subgroup} of~$\cover G$.
	\item If $\rank_\RR \cover G \ge 2$, then the Margulis arithmeticity theorem implies that (up to finite index)~$\Ad \cover G$ can be viewed as the real points of an algebraic group over~$\QQ$, in such a way that~$\Ad_{\cover G} \cover \Gamma$ is commensurable to $\big(\Ad \cover G\big)_\ZZ$.
Therefore, we can speak of standard $\QQ$-rank-1 subgroups of $\Ad \cover G$. For any such subgroup~$L$ of $\Ad \cover G$, the identity component of the inverse image $\Ad_{\cover G}^{-1} L$ is a \emph{standard $\QQ$-rank-$1$ subgroup} of~$\cover G$.
	\end{enumerate}
\end{defn}

The main theorem~\ref{QIBddGenByRank1} has the following consequence:

\begin{cor} \label{InfCenterBddGen}If $\cover\Gamma$ is a noncocompact, irreducible lattice in a connected, semisimple Lie group~$\cover G$ that has no compact factors, then $\cover\Gamma$ is quasi-isometrically boundedly generated by standard $\QQ$-rank-$1$ subgroups.

More precisely, there exist constants $r = r\big(\cover G,\cover\Gamma\big) \in \NN$ and $C = C\big(\cover G,\cover\Gamma\big) \in \RR^+$, a finite subset $\cover\Gamma_0 = \cover\Gamma_0\big(\cover G,\cover\Gamma\big)$ of~$\cover\Gamma$, and a finite collection $\subgroups = \subgroups\big(\cover G,\cover\Gamma\big)$ of standard $\QQ$-rank-$1$ subgroups of~$\cover G$, such that every element~$\gamma$ of~$\cover\Gamma$ can be written in the form $\gamma = x_1 x_2 \cdots x_r$, where, for each~$i$, we have either:
		\begin{enumerate}\itemsep=0pt
		\item[$1)$] %\label{InfCenterBddGen-product-sinL}
		$x_i \in L \cap \cover\Gamma$, for some $L \in \subgroups$, and $\ell_{\cover\Gamma}(x_i) \le C \ell_{\cover\Gamma}(\gamma)$,
		or
		\item[$2)$] $x_i \in \cover\Gamma_0$.
		\end{enumerate}
\end{cor}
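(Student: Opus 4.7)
The plan is to reduce to the finite-center setting by passing to the adjoint group. If $\rank_\RR \cover G = 1$, the statement is trivial: by Definition~\ref{StandardQInfiniteCenterDefn}(1), $\cover G$ itself is the only standard $\QQ$-rank-$1$ subgroup of~$\cover G$, so we may take $\subgroups = \{\cover G\}$, $r = C = 1$, and $\cover\Gamma_0 = \varnothing$. Henceforth assume $\rank_\RR \cover G \ge 2$, and let $\pi := \Ad_{\cover G}\colon \cover G \to G := \Ad \cover G$, with kernel $Z(\cover G)$. Then $\Gamma := \pi(\cover\Gamma)$ is a noncocompact, irreducible lattice in the semisimple Lie group~$G$, which has trivial (hence finite) center. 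Corollary~\ref{QIBddGenLie} therefore yields constants $r_0$, $C_0$, a finite set $\Gamma_0 \subseteq \Gamma$, and a finite collection $\subgroups_0$ of standard $\QQ$-rank-$1$ subgroups of~$G$ satisfying the required decomposition. For each $L \in \subgroups_0$, set $\cover L := (\pi^{-1}(L))^\circ$; this is a standard $\QQ$-rank-$1$ subgroup of~$\cover G$ by Definition~\ref{StandardQInfiniteCenterDefn}(2), and these $\cover L$'s will comprise $\subgroups(\cover G, \cover\Gamma)$ (together with a few more added in the last step).

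The next step is to lift the decomposition. For each $L \in \subgroups_0$, $\cover L \cap \cover\Gamma$ is a lattice in~$\cover L$ whose $\pi$-image is a sublattice of $L \cap \Gamma$, hence has finite index in $L \cap \Gamma$. Fix once and for all a finite set $E_L \subseteq \cover\Gamma$ whose $\pi$-image represents the cosets, and fix a lift in~$\cover\Gamma$ of each element of~$\Gamma_0$. Given $\gamma \in \cover\Gamma$, apply Corollary~\ref{QIBddGenLie} to write $\pi(\gamma) = y_1 \cdots y_{r_0}$. For each~$i$: if $y_i \in \Gamma_0$ use its fixed lift; if $y_i \in L \cap \Gamma$, factor $y_i = \pi(e_i) y_i'$ with $e_i \in E_L$ and $y_i' \in \pi(\cover L \cap \cover\Gamma)$, then lift~$y_i'$ to a shortest $\tilde y_i' \in \cover L \cap \cover\Gamma$, and set $\tilde y_i := e_i \tilde y_i'$. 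The word length of~$\tilde y_i'$ in~$\cover\Gamma$ is controlled by that of~$y_i$ in~$\Gamma$: word length in $\cover L \cap \cover\Gamma$ is comparable to that in its finite-index image $\pi(\cover L \cap \cover\Gamma) \subseteq L \cap \Gamma$ once one selects the shortest lift (the central kernel is undistorted inside this rank-$1$ lattice), and $\cover L \cap \cover\Gamma \hookrightarrow \cover\Gamma$ is a quasi-isometric embedding of discrete subgroups.

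The result is $\gamma = \tilde y_1 \cdots \tilde y_{r_0} \cdot z$ with $z \in Z(\cover G) \cap \cover\Gamma$ and $\ell_{\cover\Gamma}(z) = O(\ell_{\cover\Gamma}(\gamma))$. If $Z(\cover G)$ is finite, $z$ lies in a finite subset that can be absorbed into~$\cover\Gamma_0$, finishing the proof. The main obstacle is the genuinely infinite-center case: $Z(\cover G) \cap \cover\Gamma$ is then a finitely generated abelian group of positive rank, equal to the number of simple factors of $\cover G$ whose universal cover has infinite fundamental group (all necessarily of Hermitian type), and $z$ must be expressed as a bounded product of elements of controlled word length in standard $\QQ$-rank-$1$ subgroups of~$\cover G$. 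The Lie-theoretic input I would supply is the following selection: for each Hermitian simple factor~$\cover G_j$ with infinite~$Z(\cover G_j)$, pick a standard $\QQ$-rank-$1$ subgroup $\cover L_j \subseteq \cover G_j$ whose intersection with $Z(\cover G_j)$ has finite index in~$Z(\cover G_j)$ (for instance, the rank-$1$ subgroup built from a long root that meets the central Hermitian circle). Adjoining these $\cover L_j$ to~$\subgroups$ and decomposing~$z$ factorwise then writes it as a product of boundedly many elements in the $\cover L_j \cap \cover\Gamma$ together with a finite correction, with word-length bounds coming from the fact that central cyclic subgroups are undistorted in the rank-$1$ lattices. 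Verifying this selection rigorously is the technical heart of the extension.
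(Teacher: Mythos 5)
Your overall strategy---reduce to the adjoint group, apply Corollary~\ref{QIBddGenLie} there, lift the factorization back to~$\cover\Gamma$, and observe that the only new obstruction is the leftover central element~$z$---is exactly the route the paper takes. The paper states the finite-center case is ``simply Corollary~\ref{QIBddGenLie}'' and that the infinite-center case ``is completed by'' Lemma~\ref{QContainsZ}, which is precisely the statement you reduce to: there is a finite collection of standard $\QQ$-rank-$1$ subgroups whose intersections with $Z\big(\cover G\big)$ jointly span a finite-index subgroup.

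The genuine gap is that you do not prove this last statement. You assert that for each Hermitian simple factor one can ``pick a standard $\QQ$-rank-$1$ subgroup~$\cover L_j$ whose intersection with $Z\big(\cover G_j\big)$ has finite index,'' offer the unverified suggestion of ``the rank-$1$ subgroup built from a long root that meets the central Hermitian circle,'' and then explicitly concede that ``verifying this selection rigorously is the technical heart of the extension.'' That is exactly where the paper does real work. The paper first reduces from $\QQ$-rank-$1$ to $\RR$-rank-$1$ standard subgroups (every such subgroup is conjugate into a standard $\QQ$-rank-$1$ subgroup), and then proves the $\RR$-rank-$1$ existence statement by a nontrivial argument: it passes to a compact maximal torus, examines the Vogan diagram, observes that the noncompact simple root~$\beta$ is not adjacent to the minimal root~$\mu$ (so $G_\beta$ sits in a proper Levi factor~$M'$), shows that the lift of~$M'$ still contains a finite-index subgroup of the center, and closes by induction on $\dim G$. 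Nothing in your sketch substitutes for this; in particular, for groups like $\mathrm{E\,III}$ or $\mathrm{E\,VII}$ it is not at all apparent that a ``long-root'' $\RR$-rank-$1$ subgroup lifts to one with infinite center, and indeed the paper's proof deliberately avoids having to identify the correct root directly, instead descending to a smaller Hermitian group. A secondary (but real) loose end is your word-length bookkeeping: you invoke ``$\cover L \cap \cover\Gamma \hookrightarrow \cover\Gamma$ is a quasi-isometric embedding'' and ``the central kernel is undistorted inside this rank-$1$ lattice'' without justification; these are exactly the kinds of distortion statements that the Lubotzky--Mozes--Raghunathan theorem is needed for in the finite-center setting, and they deserve at least a citation or a sentence of proof in the infinite-center setting.
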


If the center of~$\cover G$ is finite, then this is simply Corollary~\ref{QIBddGenLie} (and is immediate from Theo\-rem~\ref{QIBddGenByRank1}). In the case where the center is infinite, the proof is completed by the following observation:

\begin{lem} \label{QContainsZ} There is a finite set~$\subgroups$ of standard $\QQ$-rank-$1$ subgroups of~$\cover G$, such that $\prod\limits_{L \in \subgroups} \bigl( L \cap Z\big(\cover G\big) \bigr)$ has finite index in $Z\big(\cover G\big)$.
\end{lem}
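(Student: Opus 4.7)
The plan is to reduce the lemma to a classical topological statement about fundamental groups of semisimple Lie groups, using covering-space theory.

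The case $\rank_\RR \cover G = 1$ is immediate (take $\subgroups = \{\cover G\}$), so I assume $\rank_\RR \cover G \ge 2$. Let $G := \Ad \cover G$, which has trivial center and (by Margulis arithmeticity) a $\QQ$-structure, and let $p \colon \cover G \to G$ be the adjoint map, with discrete kernel $Z(\cover G)$. Fix a maximal $\QQ$-split torus $\Ta \subset G$ and a basis $\alpha_1,\ldots,\alpha_\ell$ of simple $\QQ$-roots, and for each~$i$ let $L_i \subset G$ be the standard $\QQ$-rank-$1$ subgroup associated to $\alpha_i$ and $\cover L_i := p^{-1}(L_i)^\circ \subset \cover G$. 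My candidate is $\subgroups := \{\cover L_1,\ldots,\cover L_\ell\}$, possibly augmented as explained at the end.

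The long exact sequence of homotopy groups for the central extension $Z(\cover G) \to \cover G \to G$ with discrete kernel yields the identification $Z(\cover G) \cong \pi_1(G)/\pi_1(\cover G)$; restricting to $L_i \hookrightarrow G$ identifies $\cover L_i \cap Z(\cover G)$ with the image in $\pi_1(G)/\pi_1(\cover G)$ of the inclusion-induced map $\iota_{i,*} \colon \pi_1(L_i) \to \pi_1(G)$. The lemma would then reduce to the assertion that $\sum_i \iota_{i,*}(\pi_1(L_i))$ has finite index in $\pi_1(G)$. To verify this, I would pass to maximal compact subgroups: choose a Cartan involution with fixed subgroup $K \subset G$ and set $K_i := L_i \cap K$, so that $\pi_1(G) = \pi_1(K)$ and $\pi_1(L_i) = \pi_1(K_i)$ via the inclusions. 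Fix a maximal torus $T_K \subset K$ containing maximal tori of each $K_i$ (after Weyl conjugation). Writing $\pi_1(K)$ as a quotient of the cocharacter lattice $X_*(T_K)$ by the sublattice spanned by the coroots of the compact root system of $K$, the key point is that the cocharacters carried by the $K_i$'s, together with these coroots, span a finite-index sublattice of $X_*(T_K)$.

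The main obstacle is verifying this last spanning assertion uniformly: it may happen that the $\ell$ simple-root subgroups alone do not contribute enough cocharacters (for instance, when the anisotropic direction of $T_K$ is large relative to $\ell$). This is easily remedied by enlarging $\subgroups$ to include standard $\QQ$-rank-$1$ subgroups for non-simple positive $\QQ$-roots, or for Weyl conjugates of~$\Ta$, since the lemma only requires \emph{some} finite collection. One may then take $\subgroups$ to be the collection of standard $\QQ$-rank-$1$ subgroups associated to all positive $\QQ$-roots of $\Ta$, and the spanning assertion follows from the fact that the coroots $\alpha^\vee$ for $\alpha$ ranging over all $\QQ$-roots, together with the compact coroots, span a finite-index sublattice of $X_*(T_K)$.
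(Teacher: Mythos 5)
Your reduction is clean and the early steps are correct: using the fibration $Z\big(\cover G\big) \to \cover G \to G$ to identify $Z\big(\cover G\big)$ with $\pi_1(G)/\pi_1\big(\cover G\big)$, and then passing to maximal compacts so that the lemma becomes the statement that the images of $\pi_1(K_i) \to \pi_1(K)$ span a finite-index subgroup, is a legitimate reformulation. The gap is in the last step, which is not a technicality but the actual content of the lemma. The assertion that the coroots $\alpha^\vee$ for $\QQ$-roots~$\alpha$, together with the compact coroots, span a finite-index sublattice of $X_*(T_K)$ does not make literal sense: those $\alpha^\vee$ are cocharacters of the noncompact $\QQ$-split torus~$\Ta$, not of the compact torus~$T_K$, so they do not lie in $X_*(T_K)$. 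Even after choosing a Cartan involution compatible with the $\QQ$-structure and transporting everything to a common Cartan subgroup, what you actually control are the cocharacters of~$T_K$ that factor through $K_i = L_i \cap K$, and these are governed by how the (possibly large) anisotropic part of~$L_i$ meets~$K$, not by the single restricted coroot $\alpha^\vee$. That this collection, modulo the compact coroot lattice, has finite index is precisely the nontrivial statement that some standard rank-one subgroup is of Hermitian type; it is not a formal consequence of anything you invoke.

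The paper proves exactly this missing ingredient, by a different route. It first reduces from $\QQ$-rank-one to $\RR$-rank-one subgroups (each standard $\RR$-rank-one subgroup lies in a conjugate of a standard $\QQ$-rank-one subgroup), observes that $Z\big(\cover G\big)$ is virtually cyclic so it suffices to exhibit a \emph{single} standard $\RR$-rank-one subgroup with infinite center, and then proves that by induction on $\dim G$ using the Vogan diagram (Table~\ref{VoganDiagFig}): the unique noncompact simple root~$\beta$ is never adjacent to the lowest root~$\mu$, so $G_\beta$ is centralized by a split torus and hence lies in a proper Levi factor, to which the inductive hypothesis applies. To complete your version you would need to substitute some equivalent structural input, for instance Harish-Chandra's theory of strongly orthogonal noncompact roots, which produces an $\SL(2,\RR)$ (hence a standard $\RR$-rank-one subgroup) whose compact circle generates the torsion-free part of $\pi_1(K)$. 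Without such an input, the final ``fact'' is an unproved claim sitting exactly where the hard work lies.
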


To prove this lemma, let us first make another definition:

\begin{defn}A closed, connected, almost-simple subgroup~$L$ of a connected, semisimple Lie group~$\cover G$ is a \emph{standard $\RR$-rank-$1$ subgroup} if there is a root~$\alpha$ of some maximal $\RR$-split torus~$S$ of~$G$, such that the Lie algebra of~$L$ is generated by the root spaces $\Lie U_\beta$ for $\beta \in \big\{{\pm} \alpha, \pm 2\alpha, \pm \frac{1}{2} \alpha \big\}$. (This implies that $\rank_\RR L = 1$.)
\end{defn}

If we choose a maximal $\RR$-split torus that contains a maximal $\QQ$-split torus, then every real root space is contained in either a $\QQ$-root space or the $\QQ$-anisotropic kernel. Since all maximal $\RR$-split tori are conjugate, we conclude that every standard $\RR$-rank-1 subgroup is contained in a~conjugate of a standard $\QQ$-rank-1 subgroup. Hence, it suffices to establish the following result (which is probably known, but we do not have a reference):

\begin{lem}There is a finite set~$\subgroups$ of standard $\RR$-rank-$1$ subgroups of~$\cover G$, such that
	$\prod\limits_{L \in \subgroups} \bigl( L \cap Z\big(\cover G\big) \bigr)$
has finite index in $Z\big(\cover G\big)$.
\end{lem}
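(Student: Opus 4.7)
The plan is to reduce to the case where $\cover G$ is simply connected and almost simple, to identify $Z(\cover G)$ with $\pi_1(K_0)$ (where $K_0$ is the maximal compact of $\Ad \cover G$), and to generate $\pi_1(K_0)$ from the fundamental groups of the maximal compacts of finitely many standard $\RR$-rank-$1$ subgroups.

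First, I would pass to the universal cover: a standard $\RR$-rank-$1$ subgroup of the cover descends to a standard $\RR$-rank-$1$ subgroup of $\cover G$, and $Z(\cover G)$ is a quotient of the center of the cover, so we may assume $\cover G$ is simply connected. Since $Z$ of a direct product is the product of the $Z$'s of its factors and each standard $\RR$-rank-$1$ subgroup lies in a single almost-simple factor, we may also assume $\cover G$ is almost simple. If $Z(\cover G)$ is finite (the non-Hermitian case), take $\subgroups = \varnothing$ and we are done. Otherwise $\cover G$ is simply connected of Hermitian type; the Iwasawa decomposition $\cover G = K A N$ together with the contractibility of $\cover G / K$ shows that $K$ is simply connected and is the universal cover of $K_0 := \Ad(K) \subseteq \Ad\cover G$, giving a canonical isomorphism $Z(\cover G) \cong \pi_1(K_0)$.

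Next I would invoke the standard decomposition
\[
\Lie k_0 = \Lie m \oplus \bigoplus_{\alpha \in \Sigma^+} \Lie k_\alpha ,
\qquad \Lie m := Z_{\Lie k_0}(\Lie a),
\qquad \Lie k_\alpha := \Lie k_0 \cap (\Lie g_\alpha + \Lie g_{-\alpha}) \subseteq \Lie l_\alpha ,
\]
where $\Lie l_\alpha$ is the Lie algebra of the standard $\RR$-rank-$1$ subgroup $L_\alpha$ attached to a restricted root $\alpha$. Thus the closed connected subgroups $M^\circ$ and $K_\alpha := (K_0 \cap L_\alpha)^\circ$ have Lie algebras spanning $\Lie k_0$, so they generate $K_0$. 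A standard lifting argument (working in the universal cover $K$ and using that the connected Lie subgroup generated by these has the same Lie algebra as $K_0$) then yields that $\pi_1(K_0)$ is generated by the images of $\pi_1(M^\circ)$ and the $\pi_1(K_\alpha)$. Under the isomorphism $Z(\cover G) \cong \pi_1(K_0)$, the image of $\pi_1(K_\alpha)$ coincides with $L_\alpha \cap Z(\cover G)$.

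The principal obstacle is to absorb the contribution of $M^\circ$, which is not itself a standard $\RR$-rank-$1$ subgroup. In the Hermitian case the free part of $\pi_1(K_0)$ is cyclic and is realized by a loop in the central $U(1) \subseteq K_0$; this loop is homotopic to a multiple of a loop living in the maximal compact of the $L_\alpha$ attached to a non-compact simple restricted root, so $\pi_1(M^\circ) \to \pi_1(K_0)$ has finite image. The remaining finite torsion contribution of $M^\circ$ can be absorbed either by directly enlarging $\subgroups$ with finitely many extra $L_\alpha$'s whose $\pi_1$-images cover the missing coroots of $\Lie k_0$, or by induction on $\rank_\RR \cover G$: $M$ sits inside the Levi factor of a minimal parabolic, and a variant of the argument in Lemma~\ref{LeviOK}, combined with the inductive hypothesis applied to the almost-simple factors of that Levi, decomposes $M$'s $\pi_1$-contribution in terms of standard $\RR$-rank-$1$ subgroups of $\cover G$. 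Taking the finite union of all these subgroups yields the required $\subgroups$.
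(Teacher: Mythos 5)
Your approach differs genuinely from the paper's, and it has gaps at its decisive steps. After the same initial reductions, the paper observes that $Z\big(\cover G\big)$ is virtually cyclic (since $\cover G$ is simple modulo its center), so it suffices to exhibit \emph{one} standard $\RR$-rank-$1$ subgroup with infinite center; it finds one by passing to a compact Cartan, reading off the Vogan diagram the unique noncompact simple root~$\beta$, showing $G_\beta$ lies in a proper Levi factor whose lift carries a finite-index subgroup of $Z\big(\cover G\big)$, and finishing by induction on $\dim G$. You instead reformulate the problem as generating $\pi_1(K_0)$ up to finite index from the $\pi_1$'s of the pieces of the restricted-root decomposition, where $K_0 = \Ad K$; the preliminary reductions and the identification $Z\big(\cover G\big) \cong \pi_1(K_0)$ are sound.

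The first gap is the ``standard lifting argument'': the claim that the $\pi_1$-images of $M^\circ$ and the $K_\alpha$ generate $\pi_1(K_0)$ is not a standard fact and, as stated, is false. For instance, in $K_0 = \SO(4) \cong \bigl(\SU(2)\times\SU(2)\bigr)/\{\pm(1,1)\}$ the two $\SU(2)$ factors have Lie algebras spanning $\mathfrak{so}(4)$, yet each meets the kernel $\{\pm(1,1)\}$ trivially, so each contributes trivially to $\pi_1(\SO(4)) = \ZZ/2\ZZ$. What one actually needs is that the \emph{free} part of $\pi_1(K_0)$ is hit, and that requires a real argument. The second, central, gap is precisely that argument: the assertion that the loop around the central $U(1) \subseteq K_0$ is homotopic to a multiple of a loop in the maximal compact of some $L_\alpha$ is stated without proof. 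If this were established, the lemma would follow immediately with $\subgroups = \{L_\alpha\}$, since virtual cyclicity of $Z\big(\cover G\big)$ then gives finite index for free, and all of the $\pi_1$-generation scaffolding becomes unnecessary. But verifying it is exactly the hard content, and it amounts to either a case-by-case check of the Hermitian groups or a structural reduction of the kind the paper carries out through its Vogan-diagram and Levi-factor induction. Finally, the suggested fallback via ``a variant of Lemma~\ref{LeviOK}'' does not apply: that lemma is about efficiently factoring elements of $\Ma(\ZZ)$, not about $\pi_1$ of Levi factors, and cannot be cited for this purpose without a separate statement and proof.
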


\begin{proof}There is nothing to prove unless $Z\big(\cover G\big)$ is infinite.
By treating each simple factor of~$\cover G$ individually, we may assume that $\cover G$ is simple modulo its center. Then it is well known (and can also be seen from Table~\ref{InfiniteFundGrpTable}) that $Z\big(\cover G\big)$ is virtually cyclic, so:
	\[ \text{\it It suffices to show that some standard $\RR$-rank-$1$ subgroup of~$\cover G$ has infinite center.} \]
Obviously, we may assume $\rank_\RR \cover G \ge 2$.
It is well known that (after passing to a finite cover) $\cover G$ is the universal cover of one of the groups~$G$ listed in Table~\ref{InfiniteFundGrpTable}.

	\begin{table}[ht]
	\centerline{$\begin{array}{|c|c|c|}
	\noalign{\hrule}
	G & \text{maximal compact subgroup $K$} & \text{restrictions} \vphantom{\Big|}\\
	\noalign{\hrule}
	\Sp(2n,\RR) & \Unitary(n) & n \ge 2 \vphantom{\Big|}\\
	\SO(2,n)^\circ & \SO(2) \times \SO(n) & n \ge 4 \vphantom{\Big|}\\
	\SU(m,n) & S \bigl( \Unitary(m) \times \Unitary(n) \bigr) & n \ge m \ge 2 \vphantom{\Big|}\\
	\SO^*(2n) = \SO(n,\HH) & \Unitary(n) & n \ge 4 \vphantom{\Big|}\\
	\text{E III} = E_6^{-14} = {}^2E_{6,2}^{16'} & \SO(2) \times \SO(10) & \vphantom{\Big|}\\
	\text{E VII} = E_7^{-25} = E_{7,3}^{28} & \SO(2) \times E_6\vphantom{\Big|} & \\
	\noalign{\hrule}
	\end{array}$}
	\caption{Simple Lie groups with infinite fundamental group and real rank at least two. (The group $\SU(1,n)$ of real rank one is deleted from the well-known list in \cite[p.~518]{Helgason-DiffLieSymm} and \cite[(7.147), p.~513]{Knapp-BeyondIntro}.)} \label{InfiniteFundGrpTable}
	\end{table}

From Table~\ref{InfiniteFundGrpTable}, we see that $\rank K = \rank G$, so $G$ has a compact maximal torus~$T$. Consider the root system of~$\Lie G_\CC$ with respect to this maximal torus. For each root~$\alpha$,
the values of~$\alpha$ on the Lie algebra of~$T$ are purely imaginary (because $T$ is compact), so complex conjugation sends $\alpha$ to~$-\alpha$. This implies (as is well known) that the subgroup generated by $\alpha$ and~$-\alpha$ is invariant under complex conjugation, and is therefore defined over~$\RR$. We let $G_\alpha$ be the resulting connected subgroup of~$G$, so $G_\alpha$ is locally isomorphic to either $\SL(2,\RR)$ or $\SU(2)$.

Fix a maximal compact subgroup~$K$ that contains~$T$.
The Vogan diagram of~$G$ is listed in Table~\ref{VoganDiagFig}. (The black root is the unique simple root~$\beta$, such that $G_\beta$ is not compact, i.e., such that $G_\beta \not\subseteq K$. Because it will be useful later in the proof, the minimal root~$\mu$ has been added to each picture, even though it is not a simple root and is therefore not actually part of the Vogan diagram.)
The white roots form a Dynkin diagram whose rank is one less than $\rank_\CC G$; more precisely, they form the Dynkin diagram of the semisimple part of~$K$. Hence (as is well known),
	\[\langle G_\omega \,|\, \text{$\omega$ is a white simple root} \rangle \]
is the entire semisimple part of~$K$. Therefore, any root~$\alpha$ that is not in the subalgebra generated by the white simple roots must be noncompact. (This means $\theta(x) = -x$ for all~$x$ in $\Lie U_\alpha^\CC$, where~$\theta$ is the Cartan involution determined by~$K$.) In particular, since the minimal root~$\mu$ is not a~simple root, we know that $G_\mu$ is noncompact, and therefore contains a nontrivial $\RR$-split torus~$S_\mu$ \cite[pp.~390--391]{Knapp-BeyondIntro}.

	\begin{table}[bt]
	\centerline{$\begin{array}{|c|c|}
	\noalign{\hrule}
	G & \text{Vogan diagram (and minimal root~$\mu$)} \vphantom{\Big|}\\
	\noalign{\hrule}
	\raise6mm\hbox{$\begin{matrix} \SO(2,n) \\ \text{($n$ even)} \end{matrix}$}
		 & \vphantom{\vbox to 1.75cm{\vss}} \includegraphics{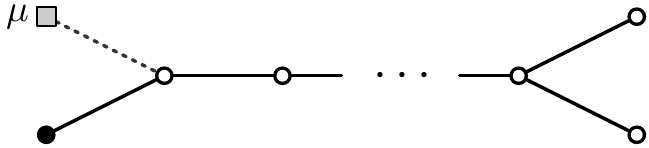} \\[2mm]
	\raise7mm\hbox{$\begin{matrix} \SO(2,n) \\ \text{($n$ odd)} \end{matrix}$}
		 & \includegraphics{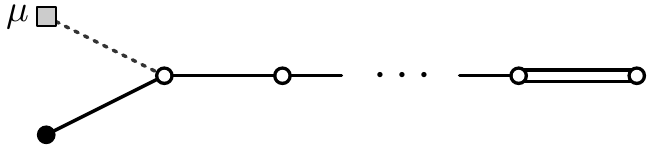} \\[2mm]
	\raise0.5mm\hbox{$\Sp(2n,\RR)$} & \includegraphics{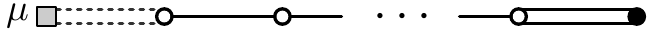} \\[2mm]
	\raise7mm\hbox{$\SU(m,n)$} & \includegraphics{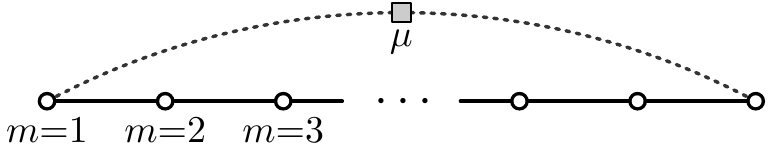} \\[2mm]
	\raise6mm\hbox{$\SO(n,\HH)$} & \includegraphics{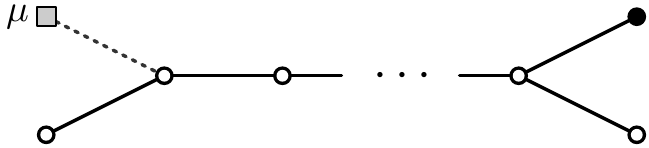} \\[-2mm]
	\raise10mm\hbox{E III} & \includegraphics{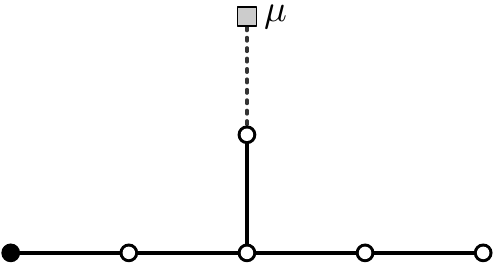} \\[2mm]
	\raise5mm\hbox{E VII} & \includegraphics{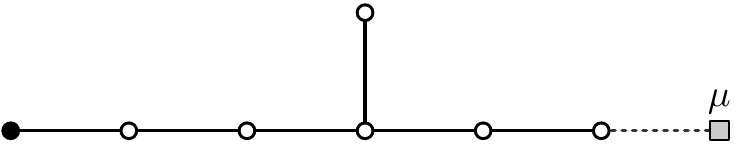} \\[2mm]
	\noalign{\hrule}
	\end{array}$}
	\caption{Vogan diagram \cite[Figs.~6.1 and~6.2, pp.~414 and~416]{Knapp-BeyondIntro} and minimal root~$\mu$ \cite[Table~I, p.~53]{Tits-Classif} of each simple Lie group with infinite fundamental group. For $\SU(m,n)$, the $m$th vertex from the left is black.} \label{VoganDiagFig}
	\end{table}

A quick look at Table~\ref{VoganDiagFig} shows that the black root~$\beta$ is not joined by an edge to the minimal root~$\mu$. (For $\SU(m,n)$, note that $m \le n$, so the black vertex is in the left half, but $m > 1$, so the black vertex is not the vertex at far left.) Therefore $G_\beta$ is centralized by $G_\mu$, so it is centralized by a nontrivial $\RR$-split torus (namely~$S_\mu$). This implies there is a proper parabolic $\RR$-subgroup~$P$ of~$G$, with Langlands decomposition $P = MAN$, such that $G_\beta \subseteq M$. Let $M'$ be the almost simple factor of~$M^\circ$ that contains~$G_\beta$.

Let $\Delta$ be the set of simple roots. (So $\Delta$ is the set of nodes of the Vogan diagram.) It is clear that $T = \langle (G_\delta \cap T)^\circ \,|\, \delta \in \Delta \rangle$.
Also, since $Z(K)^\circ \subset T$, we know that the lift of~$T$ to $\cover G$ contains an infinite subgroup of $Z\big(\cover G\big)$. Therefore, this must also be true of $(G_\delta \cap T)^\circ$, for some $\delta \in \Delta$. However, if $\delta$ is a white root, then $G_\delta$ is a compact simple group, so its universal cover has finite center. Hence, the lift of $G_\beta$ (where $\beta$ is the black simple root) must contain an infinite subgroup of $Z\big(\cover G\big)$. Since $G_\beta \subset M'$, we conclude that the lift of~$M'$ contains a subgroup of $Z\big(\cover G\big)$ that is infinite, and therefore has finite index.

By induction on $\dim G$, there is a standard $\RR$-rank-1 subgroup~$L$ of~$M'$, such that the lift of~$L$ to the universal cover~$\cover{M'}$ of~$M'$ contains a finite-index subgroup of $Z\big(\cover{M'}\big)$. Then the lift of~$L$ to~$\cover G$ contains a finite-index subgroup of $Z\big(\cover G\big)$.
Furthermore, it is easy to see that~$L$ is a~standard $\RR$-rank-1 subgroup of~$G$. (If $A'$ is a maximal $\RR$-split torus of~$M$, then each root space of~$A'$ in the Lie algebra of~$M$ is also a root space of the maximal $\RR$-split torus $A A'$ of~$G$.)
\end{proof}

\subsection*{Acknowledgements}
I thank A.~Brown, D.~Fisher, and S.~Hurtado for suggesting this problem, and for their encouragement as I worked toward a solution. Extra thanks are due to D.~Fisher for suggesting the generalization to groups with infinite center that is presented in Section~\ref{InfiniteCenterSect}.

\pdfbookmark[1]{References}{ref}
\LastPageEnding

\end{document}